\documentclass{amsproc} 
\usepackage{euscript}
\usepackage{cases}
\usepackage{mathrsfs}
\usepackage{bbm}
\usepackage{amssymb}
\usepackage{amsfonts,amsmath,amsxtra,mathdots,mathabx}
\usepackage{color}
\usepackage{hyperref}
\usepackage{tikz}
\usepackage{appendix,upgreek,setspace}

\allowdisplaybreaks

\DeclareFontFamily{U}{matha}{\hyphenchar\font45}
\DeclareFontShape{U}{matha}{m}{n}{
	<5> <6> <7> <8> <9> <10> gen * matha
	<10.95> matha10 <12> <14.4> <17.28> <20.74> <24.88> matha12
}{}
\DeclareSymbolFont{matha}{U}{matha}{m}{n}

\DeclareMathSymbol{\Lt}{3}{matha}{"CE}
\DeclareMathSymbol{\Gt}{3}{matha}{"CF}

\DeclareSymbolFont{mathc}{OML}{txmi}{m}{it}% txfonts
\DeclareMathSymbol{\varuu}{\mathord}{mathc}{117}
\DeclareMathSymbol{\varvv}{\mathord}{mathc}{118}
\DeclareMathSymbol{\varww}{\mathord}{mathc}{119}

\def\valpha{\text{\scalebox{0.84}[1.02]{$\alpha$}}}   
\def\vepsilon{\upvarepsilon}
\def\vnu{\text{{\scalebox{0.9}[1]{$\nu$}}}} 
\def\vkappa{\text{{\scalebox{0.86}[1.1]{$\kappa$}}}}

\def\bfF{\mathbf{F}}
\def\bfQ{\mathbf{Q}}

\def\bfH{\mathbf{H}}

\newcommand{\As}{{\mathrm {As}}}

\def\SO {\text{\raisebox{- 2 \depth}{\scalebox{1.1}{$ \text{\usefont{U}{BOONDOX-calo}{m}{n}O}  $}}}}
\def\RU {\mathrm{U}}

\newcommand{\BQ}{{\mathbf {Q}}} 
\newcommand{\BR}{{\mathbf {R}}} 
\newcommand{\BZ}{{\mathbf {Z}}}

\def\Tr{ \mathit{Tr}}

\newcommand{\SL}{{\mathrm {SL}}}

\newcommand{\ra}{\rightarrow} 
\def\sumx{\sideset{}{^\star}\sum}

\def\nd{\mathrm{d}}
\def\ro{\mathrm{o}}

\def\lp {\left(}
\def\rp {\right)}

\newcommand{\delete}[1]{}

\theoremstyle{plain}

\newtheorem{coro}{Corollary}[section]
\newtheorem{lem}{Lemma}[section]
\newtheorem{theorem}{Theorem}[section] 
\newtheorem{proposition}{Proposition}[section]

\newtheorem*{thm*}{Theorem}

\theoremstyle{remark} 
\newtheorem{remark}{Remark}[section] 
\newtheorem{defn}{Definition}[section]

\numberwithin{equation}{section}

\begin{document}
	
	\title{On the Second Moment of $L (1/2, \As (f))$}
	
	\begin{abstract}
		Let $\mathbf{F}$ be a real quadratic field. 
	 Let $f $ traverse a Hecke orthonormal basis of Hilbert cusp forms over $ \mathbf{F} $ of full level and parallel weight $(k,k)$. As $k \ra \infty$, 
		we prove an asymptotic formula   for the second moment of central Asai $L$-values $L (1/2, \As (f))$:   
		\begin{equation*}
			{\sum}_{f } \, \omega_f L(1/2,\As(f))^2 =   P_3 ( \log  {k  }   )  k^2 + O_{\mathbf{F},\upvarepsilon} (k^{3/2 + \upvarepsilon} ),
		\end{equation*} 
	where $\omega_f$ are the harmonic weights and $P_3 (X)$ is an explicit polynomial of degree $3$. 
	This refines the mean Lindel\"of bound $ O_{\mathbf{F},\upvarepsilon} (k^{2 + \upvarepsilon} ) $ proved by Wenzhi Luo.  
	\end{abstract}

	\author[C. Li and Z. Qi]{Changlin Li and Zhi Qi}
	\address{School of Mathematical Sciences\\ Zhejiang University\\ Hangzhou, 310027\\ China}
	\email{12135012@zju.edu.cn, zhi.qi@zju.edu.cn}
	
	\thanks{The second author was supported by National Key R\&D Program of China No. 2022YFA1005300.}
	
	\subjclass[2020]{11M41, 11F30, 11F66}
	\keywords{Asai lift, second moment, Petersson formula, Poisson summation, Bessel functions.}
	
	\maketitle
	
		\begin{spacing}{1.4}
		{  \tableofcontents}	
	\end{spacing}

		\section{Introduction}\label{sec: intro}
	
	Let $\bfF = \bfQ(\sqrt D)$ be a fixed real quadratic field, with discriminant $D > 1$. Let $\SO $, $\SO^+$, and $\RU$ denote the ring of integers, the set of totally positive integers, and  the group of units, respectively. For simplicity, assume that the narrow class number $h_{\bfF} ^{+} = 1$  and $\bfF \neq \bfQ (\sqrt{2})$ (thus $D$ is  prime and  $D \equiv 1 (\mathrm{mod}\, 4)$; see \cite[\S 26.8]{Hasse-NT}).
	
	  Let $ H_k $ be a Hecke orthonormal basis of $S_k(\mathrm{SL}_2 (\SO))$---the space of Hilbert modular cusp forms  of parallel even weight $(k,k)$ with respect to the Hilbert modular group $\mathrm{SL}_2 (\SO)$.  For $f\in H_k$, let $\omega_f$ be its harmonic weight and let $\lambda_f(\vnu )$ be its Hecke eigenvalue  at $\vnu\in \SO^+/\RU^2$. It is well-known that $ \lambda_f(\vnu) $ are real-valued. 
	
	In 1977, Asai \cite{Asai-1977} introduced the $L$-function
	\begin{equation*}
		L(s,\As(f))=\zeta(2s)\sum_{n\in\BZ_+}\lambda_f(n)n^{-s},\qquad \operatorname{Re}(s)>1,
	\end{equation*}
and he proved the  analytic continuation, the functional equation, and the Euler product for $L(s,\As(f))$. 

In 2024,  Luo \cite{Luo-Asai} initiated the study of the analytic theory of the family of central $L$-values $L(1/2,\As(f))$ and, by the large-sieve approach, he established the sharp mean Lindel\"of upper bound: 
	\begin{equation}\label{1eq: 2nd moment bound, Luo}
	\sumx_{f\in H_k}  | L  (1/2, \As(f)  )  |^2 \Lt_{\bfF, \vepsilon}  k^{2+\vepsilon} ,
\end{equation}
where  the superscript $\star$ restricts the sum to cuspidal Asai lifts $\As(f)$.

Recently, the authors \cite{LQi-Asai-LS} improved Luo's large sieve inequality for $\As (f)$ and  proved a non-trivial bound for convoluted Asai $L$-functions: 
\begin{equation}\label{1eq: 2nd moment bound}
	\sumx_{f\in H_k}  | L  (1/2, \As(f)\times  \phi  )  |^2 \Lt_{\bfF, \phi, \vepsilon}  k^{7/2+\vepsilon} ,
\end{equation}
where  $\phi $ is a fixed Hecke--Maass cusp form for $\SL_2 (\BZ)$. 

In this paper, we return to the study of the second moment of $L  (1/2, \As(f)  )$, for which we establish the following asymptotic formula. % Our main result is the following asymptotic formula. 

\begin{theorem}\label{thm: 2nd moment}
	We have 
	\begin{equation}\label{1eq: main asymp}
		\sum_{f\in H_k}\omega_f L(1/2,\As(f))^2 =  \big(  \sqrt{D}  P_3 ( \log  {k \sqrt D}   ) + C_D \big)  k^2 D + O_{\bfF,\vepsilon}\big(k^{3/2 + \vepsilon}\big),
	\end{equation}
where $P_3 (X)$ is an $\bfF$-independent polynomial of degree $3$ and leading coefficient $1 / 3$ and $C_D$ is a $D$-dependent constant, explicitly given in {\rm\eqref{3eq: Q3(X)}}, {\rm\eqref{3eq: P = Q}}, and {\rm\eqref{3eq: CD}}. 
\end{theorem}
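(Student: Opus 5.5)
We will follow the standard route: an approximate functional equation, then the Petersson formula for Hilbert modular forms, then a careful analysis of the off-diagonal Kloosterman terms via Poisson summation and Bessel function asymptotics.

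\textbf{Step 1: approximate functional equation.} The Asai $L$-function $L(s,\As(f))$ has conductor of size roughly $k^4 D^{2}$ (degree $4$, archimedean parameters of size $k$ at both real places). We write $L(1/2,\As(f))^2$ via an approximate functional equation for the square, i.e.\ for the degree-$8$ product, with root number $+1$. We first use the Hecke relations for $\lambda_f$ restricted to rational integers, $\lambda_f(m)\lambda_f(n)=\sum_{\mathfrak d\mid (m,n)}\lambda_f(mn/\mathfrak d^2)$, where $\mathfrak d$ runs over ideals of $\SO$, so that the product becomes a single Dirichlet series. Here the ideal sum over $\mathfrak d \mid (m,n)$ reduces to rational and non-rational divisors. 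This gives
\[
L(1/2,\As(f))^2 = 2\sum_{n}\frac{a(n)\lambda_f(n)}{\sqrt n}\,V\Big(\frac{n}{k^2 D}\Big)
\]
plus lower-order terms, with $a(n)$ divisor-like coefficients incorporating $\zeta(2s)^2$ and the Hecke relation. Effectively the length is $n\approx k^2$.

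\textbf{Step 2: Petersson formula.} We apply the Petersson trace formula for $\SL_2(\SO)$ to $\sum_f\omega_f\lambda_f(n)\lambda_f(1)$, treating $n$ as an element of $\SO^+$. Since $n\in\BZ$ and $\lambda_f$ is indexed by $\SO^+/\RU^2$, the diagonal $\delta(n\sim 1)$ and a sum over units occur. The diagonal contribution produces a contour integral $\frac{1}{2\pi i}\int \zeta$-type products $\cdot\,\gamma$-ratio $\cdot (k^2D)^s\,ds/s$ with a pole of order $4$ at $s=0$. This yields $\sqrt D P_3(\log k\sqrt D)k^2D$; the leading coefficient $1/3$ comes from the residue calculation, and $C_D$ arises from $L(1,\chi_D)$-type and $\zeta_{\bfF}$ constants. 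The normalization $k^2 D$ reflects the harmonic weights and the volume. The off-diagonal terms are
\[
\sum_{c\in\SO\smallsetminus\{0\}/\RU}\ \sum_n \frac{a(n)}{\sqrt n}V\Big(\frac{n}{k^2D}\Big)\frac{S(1,n;c)}{|N c|}\,J_{k-1}\Big(\frac{4\pi\sqrt{n}}{|c_1|}\Big)J_{k-1}\Big(\frac{4\pi\sqrt{n}}{|c_2|}\Big),
\]
where $c_1,c_2$ are the two embeddings of $c$.

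\textbf{Step 3: the off-diagonal.} This is the main obstacle. Since $J_{k-1}(x)$ is negligible for $x\le k-k^{1/3+\vepsilon}$, we need $\sqrt n/|c_j|\gg k$ at both places, i.e.\ $|c_1|,|c_2|\ll 1/\sqrt{k}$ roughly given $n\ll k^{2+\vepsilon}$, so $|Nc|\ll 1/k$. The unit group lets us choose a representative, but no nonzero integer $c$ has $|Nc|<1$. Therefore the off-diagonal is only present through the transition range, or through $n$ slightly beyond $k^2$ in the tail of $V$. We will examine this carefully. More likely, the Asai family requires $\sqrt n/|c_1|$ and $\sqrt n/|c_2|$ both $\gtrsim k$ with $|c_1c_2|\ge 1$, which forces $n\gtrsim k^2|Nc|\ge k^2$. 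In that case terms with $n\asymp k^2$ survive near the transition range $x\approx k$ at both places.

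We handle these terms by applying Poisson summation in $n$ modulo $c$. The $n$-sum runs over rational integers, while the Kloosterman sum is over $\SO/c$, so we split $n$ into residues modulo $\mathrm{lcm}$ of the rational part. We then use the uniform (Airy-type) asymptotics of $J_{k-1}$ near its transition point. The dual sum should give a second-order contribution absorbed into $C_D$ or an error term. We will bound the dual side by a square-root saving in the $n$-sum together with the transition-range width $k^{1/3}$ at each place, aiming at $O(k^{3/2+\vepsilon})$. The exponent $3/2$ should arise as $k^2$ times the saving $k^{-1/2}$ from the Bessel integral stationary phase. We also expect to verify that the non-rational divisors in Step 1 contribute only to the constant term.
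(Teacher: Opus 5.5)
There is a genuine gap in Step~3: the off-diagonal carries a main term, and your proposal never computes it. Your size bookkeeping there is off. With $n\ll k^{2+\epsilon}$, the requirement $\sqrt n/|c_j|\gg k$ at both places gives $|c|,|c'|\ll k^{\epsilon}$, not $\ll k^{-1/2}$. So every $c$ with both embeddings $\ll k^{\epsilon}$ survives, in particular $c=\pm1,\pm2,\dots$. For these $c$ the Bessel arguments sweep through $[k,k^{1+\epsilon}]$, which is mostly the oscillatory range, not a transition window.

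The off-diagonal therefore cannot be an error term. Already for $c=\pm1$ the Kloosterman sum is $1$, and the Bessel product is $J_{k-1}(4\pi\sqrt n)^2\geq 0$ with mean $\asymp n^{-1/2}$ once $n\geq (k/2\pi)^2$. Since your divisor-type coefficients $a(n)$ are nonnegative, these terms alone contribute $\gg k^2$ to the moment, which is the size of $C_Dk^2D$. Saying the dual sum is ``absorbed into $C_D$ or an error term'' leaves the constant undetermined. Nor does $C_D$ come from $L(1,\chi_D)$- or $\zeta_{\bfF}$-type diagonal constants.

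In the paper, $C_D=4\sqrt D\,C+4A_D$, with two sources:
\begin{itemize}
\item $C$ is the diagonal contribution from the polar line $v_1=-v_2$ of $\zeta(1+v_1+v_2)$.
\item $A_D$ is the zero frequency, for rational moduli $c$, of Poisson summation in both $n_1,n_2$ modulo $N(c)$. It is evaluated using $T(0,0;c)=\varphi(|c|)$, the Mellin--Barnes formula for $J_{k-1}^2$, and $\sum_c\varphi(c)c^{-1-s}=\zeta(s)/\zeta(1+s)$.
\end{itemize}
The remaining pieces also need input your sketch lacks:
\begin{itemize}
\item Evaluating $T(0,0;c)$ shows the zero frequency vanishes unless $c$ is an integer times a unit.
\item The unit multiples are $O(k^{-1/2+\epsilon})$, by Olver's uniform asymptotics and the second-derivative test; the key fact is that $\epsilon'^2>1$ keeps one Bessel factor away from its turning point.
\item The nonzero frequencies are handled by Macdonald's formula, which turns $J_{k-1}(x)J_{k-1}(y)$ into an integral of $J_{k-1}(4\pi r)$ against the bilinear phase $e(x_1x_2/(|N(c)|r))$. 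This is followed by stationary phase in $x_1,x_2$ and Cauchy--Schwarz with $\int_0^\infty J_{k-1}(x)^2\,dx/x=1/(2k-2)$.
\end{itemize}
Airy-type transition asymptotics play no role in controlling the main off-diagonal.

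Step~1 also fails as written.
\begin{itemize}
\item Linearizing $\lambda_f(m)\lambda_f(n)$ via the Hecke relations over ideals produces $\lambda_f(\mathfrak a)$ at non-rational ideals. For example, $\bar{\mathfrak p}^{2}$ appears when $p=\mathfrak p\bar{\mathfrak p}$ splits. These terms have the same length and size as the rational ones, so they are not lower order, and your displayed single sum over rational $n$ is not an identity.
\item More seriously, the resulting coefficients are divisor-like and not periodic, so Poisson summation in $n$ modulo $c$ is not available. To use Poisson you must reopen $a(\cdot)$ into its two factors, which is exactly the paper's set-up.
\item A smaller point: $L(s,\As(f))$ itself has analytic conductor $\asymp k^2D$, since only $\Gamma(s+k-1)$ in $\gamma_{k-1}(s)$ is large. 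The value $k^4D^2$ is the conductor of the square, so your length $k^2D$ is correct.
\end{itemize}
The paper instead squares two approximate functional equations of length $\asymp k$ and applies Petersson to $\lambda_f(n_1)\lambda_f(n_2)$. It reads off $Q_3$ and $C$ from the fourth-order pole and the line $v_1=-v_2$ of the diagonal double Mellin integral, and treats the off-diagonal as described above.
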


\begin{remark}
	As $L  (1/2, \As(f)     )$ is real-valued, for brevity,  $ |L(1/2,\As(f))|^2 $ in {\rm\eqref{1eq: 2nd moment bound, Luo}} is replaced by $ L(1/2,\As(f))^2 $ in {\rm\eqref{1eq: main asymp}}. 
\end{remark}

\begin{remark}
	Luo restricted the sum in {\rm\eqref{1eq: 2nd moment bound, Luo}} to those cuspidal $\mathrm{As} (f)$   to avoid the possible simple pole of $ L (s, \mathrm{As} (f)) $. This seems inessential as  we may suitably choose the test function  so that  the residual term in the approximate functional equation  is annihilated. See \cite[Theorem 5.3]{IK} and \S {\rm\ref{sec: Asai}}. 
\end{remark}

\begin{remark}
	Note that if $\bfF / \BQ$ were replaced by the split quadratic algebra $\BQ \times \BQ$, then the analogue of   $L (1/2,  \mathrm{As}(f) )$ would be the Rankin--Selberg   $L (s, h \times h) = \zeta (s) L (s, \mathrm{Sym}^2 h)$, for $ h \in S_{k}  (\mathrm{SL}_2 (\BZ))$.  However, there is no result for $ L (s, \mathrm{Sym}^2 h) $ in the literature like Theorem {\rm\ref{thm: 2nd moment}}; for the state-of-the-art result on $\mathrm{Sym}^2 h$, we refer the reader to {\rm\cite{Khan-Sym2-Non-vanishing,Khan-Young-Sym2}}. 
%	There are 2 major differences between the second moments of $  L (1/2, \As(f))$ and $ L (1/2, \mathrm{Sym}^2 h) ${\rm:}
Moreover, let us emphasize two technical points: 
	\begin{itemize}
		\item [{\rm(1)}] in the case of $\As(f)$, the off-diagonal terms are mainly of  analytic nature, since the moduli $c$ of the Kloosterman sums are $O (k^{\vepsilon})$ (see \eqref{eq: range of c});
		\item [{\rm(2)}] in the case of $\mathrm{Sym}^2 h$, the  constant corresponding to $C_D$ vanishes due to cancellation {\rm(}see Remark {\rm\ref{rem: constant CD}}{\rm)}. 
	\end{itemize}
\end{remark}

\subsection*{Non-vanishing of Asai Central $L$-values}

Further, it is relatively easier to prove the following asymptotic formulae. 

\begin{theorem}\label{thm: 1st moment}
	We have 
	\begin{equation}\label{1eq: 0th moment}
		\sum_{f\in H_k}\omega_f  =  { k^2} D \sqrt{  D  } +  
		O_{ \bfF}(k  ), 
	\end{equation}
	\begin{equation}\label{1eq: asymp, 1st}
		\sum_{f\in H_k}\omega_f L(1/2,\As(f)) =   
		\bigg( 
		 \log\bigg(\frac{k \sqrt D}{16\pi^2  }\bigg)  + \gamma
		\bigg) {  k^2} D \sqrt{  D  }
		+
		O_{ \bfF}(k \log k),
	\end{equation}
	where $\gamma$ is the Euler constant.  
\end{theorem}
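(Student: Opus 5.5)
We will derive both formulae from the Petersson trace formula for Hilbert modular forms over $\bfF$ with narrow class number one. For $\vnu_1,\vnu_2\in\SO^+$ it reads
\begin{equation*}
\sum_{f\in H_k}\omega_f\lambda_f(\vnu_1)\lambda_f(\vnu_2)=\delta_{\vnu_1,\vnu_2}\,\mathcal{N}_k+\mathcal{N}_k\sum_{c}\sum_{\epsilon}\frac{S(\vnu_1,\epsilon\vnu_2;c)}{|\mathrm{N}c|}\,J_{k-1}\Big(\frac{4\pi\sqrt{\vnu_1\epsilon\vnu_2}}{|c|}\Big)J_{k-1}\Big(\frac{4\pi\sqrt{\vnu_1'\epsilon'\vnu_2'}}{|c'|}\Big).
\end{equation*}
Here the normalizing constant $\mathcal{N}_k$ comes from the harmonic weights. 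With our normalization we expect $\mathcal{N}_k\asymp k^2 D\sqrt D$, the factor $D\sqrt D$ coming from the volume and the different.

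For \eqref{1eq: 0th moment} we take $\vnu_1=\vnu_2=1$. The diagonal gives $k^2D\sqrt D$ up to lower order terms coming from the exact Gamma factors, which are $O(k)$. For the off-diagonal terms we use $J_{k-1}(x)\ll (ex/2k)^{k-1}$ for $x\le k/10$, and otherwise the bound $J_{k-1}(x)\ll k^{-1/3}$. Then each term with $|c|,|c'|\gg1$ is exponentially small. The remaining finitely many $(c,\epsilon)$ pairs, taken modulo units, are handled by the uniform bound. This gives $O(k^{4/3})$ at worst, and the finer bound $J_{k-1}(x)\ll k^{-1/2}$ for $x$ well below $k$ recovers $O(k)$.

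For \eqref{1eq: asymp, 1st} we apply the approximate functional equation of \S\ref{sec: Asai}. The conductor of $\As(f)$ is $\asymp (kD)^{2}$ up to the constant $16\pi^2$ coming from the $\Gamma_\BC(s+k-1)\Gamma_\BR$ factors. We choose the test function so that the residual term vanishes. The root number should be $+1$, so
\begin{equation*}
L(1/2,\As(f))=2\sum_{n}\sum_{m}\frac{\lambda_f(n)}{m\sqrt{n}}V\Big(\frac{m^2n}{k\sqrt D}\Big).
\end{equation*}
Averaging with the Petersson formula at $\vnu_1=n$, $\vnu_2=1$, the diagonal $n=1$ leaves
\begin{equation*}
2\mathcal{N}_k\sum_m\frac1m V\Big(\frac{m^2}{k\sqrt D}\Big).
\end{equation*}
A contour shift picks up the double pole at $s=0$ of $\zeta(1+2s)G(s)\gamma\text{-ratio}/s$. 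This produces $\log(k\sqrt D/16\pi^2)+\gamma$: the $\gamma$ comes from the Laurent expansion of $\zeta$, and the $16\pi^2$ from the digamma terms of the gamma ratio expanded at large $k$. The error from that expansion is $O(k\log k)$.

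The off-diagonal terms involve $n\le k^{1+\vepsilon}$ rational. Since $n=n'$, the Bessel arguments are $4\pi\sqrt{n}\,|\sqrt{\epsilon}/c|$ and the analogous conjugate, whose product is $16\pi^2 n/|\mathrm{N}c|$. When both arguments are small compared to $k$ the terms are negligible. So we need $\sqrt n/|c|\gg k$ and $\sqrt n/|c'|\gg k$, which forces $n\gg k^2|\mathrm{N}c|$. This is incompatible with $n\le k^{1+\vepsilon}$, so the off-diagonal part is negligible.

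The main obstacle is justifying this last step uniformly over the units $\epsilon$. The argument $\sqrt{\epsilon}$ can be large while $\sqrt{\epsilon'}$ is small, so only one of the two $J$-factors is tiny. Our plan is to use the product of the two Bessel bounds together with the Taylor bound $(x/2)^{k-1}/\Gamma(k)$ on the small one. Since $xx'=16\pi^2 n/|\mathrm{N}c|\le k^{1+\vepsilon}$, the product is bounded by roughly $(exx'/(4k\cdot x'))^{k}\cdot$ small, so it decays geometrically in $\log\epsilon$ once $x'<k/2$. The sum over $\epsilon$ therefore converges and is $O(k^{-A})$.
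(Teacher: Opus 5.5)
Your route is the paper's (Section 7): the Petersson formula \eqref{eq: Petersson} together with the crude bound \eqref{2eq: prelim bounds} to discard every off-diagonal term; the approximate functional equation \eqref{eq: AFE}, with the diagonal $n=1$ evaluated by the residue at the double pole $v=0$; and finally $\psi(k-\frac12)=\log k+O(1/k)$. Your first-moment argument, including the last paragraph on summing over unit orbits, is sound. Expanding $\zeta(2s)$ into an $m$-sum and resumming is only cosmetic.

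The step that fails as written is your off-diagonal argument for \eqref{1eq: 0th moment}.
\begin{itemize}
\item The pairs $(c,\epsilon)$ not covered by $|c|,|c'|\gg 1$ are not finitely many. Each $c$ of bounded norm has infinitely many unit translates, and the terms are not unit-invariant because the Bessel arguments move. So the uniform bound $J_{k-1}\ll k^{-1/3}$ summed over these terms does not even converge.
\item $J_{k-1}(x)\ll k^{-1/2}$ ``for $x$ well below $k$'' is not a refinement. In that range $J_{k-1}(x)$ is exponentially small; bounds of size $k^{-1/2}$ belong to the oscillatory range $x$ well above $k$.
\end{itemize}

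The correct observation is simpler. For $\nu_1=\nu_2=1$ the two Bessel arguments have product $16\pi^2/|\mathrm{N}(c)|\le 16\pi^2$. Hence one of them is at most $4\pi$, and that factor is $\ll (2\pi e/(k-1))^{k-1}$, while the other factor is $\le 1$. Combined with the geometric decay along unit orbits from your own final paragraph, the entire off-diagonal is $O(k^{-A})$; this is \eqref{7eq: M0k(m)} with $m=n=1$.

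So there is no off-diagonal contribution of size $k^{4/3}$ or $k$ at all. The $O(k)$ in \eqref{1eq: 0th moment} comes only from the diagonal constant, via $(k-1)^2=k^2+O(k)$. For that you must take the constant exactly, $\mathcal N_k=(k-1)^2D\sqrt D$ as read off from \eqref{eq: Petersson}, not merely $\mathcal N_k\asymp k^2D\sqrt D$.

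Minor points in the first-moment part:
\begin{itemize}
\item A term is already negligible when one argument is small, since the other $J$-factor is bounded. This, not ``both small'', is what your next sentence actually uses.
\item The analytic conductor is $\asymp Dk^2$, not $(kD)^2$; your AFE length $k\sqrt D$ is nonetheless the right one.
\item The constants $16\pi^2$ and $\gamma$ come from $k$-independent factors, not from the large-$k$ expansion. Up to $O(k^{-A})$, the residue gives $2V_{k-1}(1/\sqrt D)=\psi(k-\frac12)+\psi(\frac12)+2\gamma-2\log 2\pi+\log\sqrt D$, and $\psi(\frac12)=-\gamma-2\log 2$.
\end{itemize}
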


\begin{remark}
	In view of {\rm\eqref{1eq: main asymp}} and {\rm\eqref{1eq: asymp, 1st}}, according to  \cite{Conrey-FKRS-Moments},  the symmetry type of $\As (f)$ is symplectic---the same as $\mathrm{Sym}^2 h$. 
\end{remark}

\begin{coro} 
	We have 
	\begin{equation}\label{1eq: non-vanishing}
		\sum_{\substack{f\in H_k\\ L(1/2,\As(f))\neq0}}\omega_f \geqslant  \frac{3 - \vepsilon}{\log k} \sum_{f\in H_k}\omega_f,
	\end{equation} 
for any $\vepsilon > 0$. 
\end{coro}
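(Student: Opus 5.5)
The corollary follows from Cauchy--Schwarz applied to the first and second moments; no mollifier is needed. Write $M_0=\sum_{f\in H_k}\omega_f$, $M_1=\sum_f\omega_f L(1/2,\As(f))$, and $M_2=\sum_f\omega_f L(1/2,\As(f))^2$. The Asai $L$-values are real, but they could be negative, so we bound $|M_1|$. Since $\omega_f>0$, Cauchy--Schwarz with weights $\omega_f$ gives
\begin{equation*}
M_1^2=\Big(\sum_{L(1/2,\As(f))\neq 0}\omega_f\cdot L(1/2,\As(f))\Big)^2\leqslant \Big(\sum_{L(1/2,\As(f))\neq 0}\omega_f\Big)\, M_2 .
\end{equation*}
Therefore the weighted proportion of non-vanishing central values is at least $M_1^2/(M_0M_2)$.

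Next we insert the asymptotics.

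\begin{itemize}
\item By \eqref{1eq: 0th moment}, $M_0=k^2D\sqrt D\,(1+O(1/k))$.
\item By \eqref{1eq: asymp, 1st}, $M_1=(\log k+O_{\bfF}(1))\,k^2D\sqrt D$. Hence $M_1^2=(\log k)^2(1+O(1/\log k))\,k^4D^3$.
\item By Theorem \ref{thm: 2nd moment}, $P_3$ has degree $3$ and leading coefficient $1/3$, and $\log(k\sqrt D)=\log k+O(1)$. So $\sqrt D P_3(\log k\sqrt D)+C_D=\tfrac{\sqrt D}{3}(\log k)^3(1+O(1/\log k))$. Together with the error $O(k^{3/2+\vepsilon})$, this gives $M_2=\tfrac13(\log k)^3k^2D\sqrt D\,(1+O_{\bfF}(1/\log k))$.
\end{itemize}

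Dividing, we get
\begin{equation*}
\frac{M_1^2}{M_0M_2}=\frac{(\log k)^2k^4D^3}{k^2D\sqrt D\cdot \tfrac13(\log k)^3k^2D\sqrt D}\big(1+O(1/\log k)\big)=\frac{3}{\log k}\big(1+O_{\bfF}(1/\log k)\big).
\end{equation*}
For $k$ large in terms of $\bfF$ and $\vepsilon$, this is at least $(3-\vepsilon)/\log k$, which is \eqref{1eq: non-vanishing}.

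There is no real obstacle: everything is bookkeeping of leading terms. The one point to watch is that the coefficient $\sqrt D\cdot\frac13$ of $(\log k)^3 k^2D$ in $M_2$ exactly matches $(D\sqrt D)^2/(D\sqrt D)$ from the first two moments, so that the $D$-dependence cancels and the constant $3$ is independent of $\bfF$. For small $k$ the corollary is interpreted asymptotically, or $\vepsilon$ absorbs the finitely many exceptions.
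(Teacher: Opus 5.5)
Your proposal is correct and is essentially the paper's proof: Cauchy--Schwarz with the nonnegative weights $\omega_f$, then inserting the leading terms $\frac{1}{3}D^{3/2}k^2\log^3 k$, $D^{3/2}k^2\log k$ and $D^{3/2}k^2$ from Theorems~\ref{thm: 2nd moment} and~\ref{thm: 1st moment}, which gives the ratio $(3+o(1))/\log k$. One small correction to your closing remark: for fixed small $\vepsilon$ and $k<e^{3-\vepsilon}$, the right-hand side exceeds $\sum_f\omega_f$, so $\vepsilon$ cannot ``absorb'' the small-$k$ exceptions; the statement, in the paper as well, must be read as holding for $k$ sufficiently large in terms of $\bfF$ and $\vepsilon$.
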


	\begin{proof}
By Theorems \ref{thm: 2nd moment} and  \ref{thm: 1st moment}, we have  
\begin{equation*}%\label{7eq: second moment main for Cauchy}
	\sum_{f\in H_k}\omega_fL(1/2,\As(f))^2
	=
	\bigg(\frac{D^{3/2}}{3}+o(1)\bigg)k^2\log^3 k , 
\end{equation*} 
	\begin{equation*}%\label{7eq: first moment main for Cauchy}
		\sum_{f\in H_k}\omega_fL(1/2,\As(f))
		=
		\big(D^{3/2}+o(1)\big)k^2\log k.
	\end{equation*} 
On substituting these into the Cauchy inequality: 
\begin{equation*}%\label{7eq: Cauchy nonvanishing}
	\bigg( 
	\sum_{f\in H_k}\omega_fL(1/2,\As(f))
	\bigg)^2
	\leqslant \bigg(\sum_{\substack{f\in H_k\\ L(1/2,\As(f))\neq0}}\omega_f \bigg)   
	\bigg(\sum_{f\in H_k}\omega_fL(1/2,\As(f))^2\bigg),  
\end{equation*} 
	we obtain  
	\begin{equation*}%\label{1eq: asymp, non-vanishing}
		\sum_{\substack{f\in H_k\\ L(1/2,\As(f))\neq0}}\omega_f \geqslant  \big(3D^{3/2} - o(1)  \big) \frac{k^2}{\log k}.
	\end{equation*}
From this and \eqref{1eq: 0th moment}, we deduce readily \eqref{1eq: non-vanishing}. 
\end{proof}

\subsection*{Notation}   
By $F \Lt G$ or $F = O (G)$ we mean that $|F| \leqslant c G$  for some constant $c  > 0$, and by $F \asymp G$ we mean that $F \Lt G$ and $G \Lt F$. We write $F \Lt_{\valpha, \beta, ...} G $ or $  F = O_{\valpha, \beta, ...} (G) $ if the implied constant $c$ depends on $\valpha$, $\beta$, ....

By `negligibly small' we mean  $O_A (k^{-A})$ (or $  O_A (\vkappa^{-A})$ as we shall set $\vkappa = k-1$) for arbitrarily large but fixed $A > 0$. 

Throughout the paper,  $\upvarepsilon $ is arbitrarily small and its value may differ from one occurrence to another.

		\section{Preliminaries}\label{sec: preliminaries}
	
	\subsection{Basic Notation}\label{subsec: basic notation}
	
Let $\bfF  \hookrightarrow \BR^2$ via its two real embeddings.	
Let $\xi'$ denote the Galois conjugate of $\xi\in\bfF$.  Define the norm and the trace 
	\begin{equation*}
		N(\xi)=\xi\xi',\qquad \Tr(\xi)=\xi+\xi'.
	\end{equation*}
%Let $\SO $, $\SO^+$, and $\RU$ denote the ring of integers, the set of totally positive integers, and  the group of units, respectively.     
For $x \in \mathbf{R}^2$, we write
	\begin{equation*}
	  e_{\bfF}[x]=e(\Tr( x / \sqrt{D}) ) = \exp(2\pi i \Tr(x / \sqrt{D})) . 
	\end{equation*} 
Note that  the different ideal is $( \sqrt{D})$.   For $\vnu_1,\vnu_2\in\SO$ and $c\in\SO\smallsetminus\{0\}$, define the Kloosterman sum
	\begin{equation*}
		S_{\bfF}(\vnu_1,\vnu_2;c)=\sumx_{\valpha\, (\mathrm{mod}\, c)}
		e_{\bfF}\bigg[\frac{\vnu_1\valpha+\vnu_2 \widebar{\valpha} }{c} \bigg],
	\end{equation*}
	where the star $\star$ denotes $(\valpha,c)=1$, and $\widebar{\valpha}$ is the inverse of $\valpha$ modulo $c$. 

\subsection{Hilbert Cusp Forms} 

For $k\in 2\BZ_+$, every $f\in H_k$ has Fourier expansion
\begin{equation*}
	f( {z})=\sum_{\vnu\in\SO^+ } a_f(\vnu)e(\Tr(\vnu  {z} / \sqrt{D}) ), \qquad  {z}  \in \bfH^2,  
\end{equation*}
where $ \bfH $ is the hyperbolic upper half-plane.  We have  $a_f(\epsilon^2\vnu)=a_f(\vnu)$ for any $\epsilon\in\RU$. 
It is well-known that the Fourier coefficients $a_f (\vnu)$ and the Hecke eigenvalues $\lambda_f (\vnu) $ are related by 
\begin{equation*}
	a_f(\vnu)=a_f(1)\lambda_f(\vnu)N(\vnu)^{(k-1)/2}.
\end{equation*}
Define the harmonic weight 
\begin{align*}%\label{2eq: harmonic weight}
	\omega_f = \frac{\Gamma (k)^2 D^{k+1} }{(4\pi)^{2(k-1)} } |a_f(1)|^2. 
\end{align*} 
\delete{Note that $ \omega_f $ only plays a very minor role because we have a direct extension of the bounds of Iwaniec, Hoffstein, and Lockhart \cite{Iwaniec-L(1),HL-L(1)}
\begin{align}
	k^{-\vepsilon} \Lt	\omega_f \Lt k^{\vepsilon} . 
\end{align}}

	\subsection{Asai $L$-functions}\label{sec: Asai}
Recall that the Asai $L$-function $L (s, \mathrm{As}(f))$ is defined by
\begin{equation}\label{2eq: L(s, As)}
	L(s,\As(f) ) = \zeta(2s) \sum_{n \in \BZ_{+}}  \lambda_f(n) n^{-s}, \qquad \text{($\mathrm{Re} (s)>1$)}.
\end{equation}
Asai \cite{Asai-1977} proved that $L(s,\As(f) ) $ has  analytic continuation to the whole $s$-plane with a possible simple pole at $s =  1$, and satisfies the functional equation
\begin{equation}\label{2eq: FE}
	\Lambda (s,\As(f) ) = \Lambda (1-s,\As(f) ),
\end{equation}
where $\Lambda (s, \As(f) ) = D^{s/2} \gamma_{k - 1} (s ) L (s,\As(f) )$, with 
\begin{equation}\label{2eq: gamma k(s)}
	\gamma_{\vkappa}(s ) = (2\pi)^{-2s} \Gamma(s+ \vkappa) \Gamma(s) .
\end{equation} 

According to \cite[Theorem 5.3]{IK}, we deduce from \eqref{2eq: L(s, As)} and \eqref{2eq: FE} the approximate functional equation: 
	\begin{equation}\label{eq: AFE}
	L(1/2,\As(f))=2 \sum_{n \in \BZ_{+}} \frac{\lambda_f(n)}{\sqrt{n} }V_{k-1} \bigg(\frac{n}{\sqrt D}\bigg),
\end{equation}
with  
\begin{equation}\label{eq: V(y)}
	V_{\vkappa} (y)=\frac{1}{2\pi i}\int_{(3)}  \zeta(1+2 v)
	\frac{\gamma_{\vkappa} (1/2+ v )}{\gamma_{\vkappa} (1/2 )}  G (v) y^{-v} \frac{\nd v}{v}, 
\end{equation}
where we choose the test function
\begin{align}\label{2eq: G(s)}
	 G (v) = (1 - 4 v^2 )  \exp (v^2), 
\end{align}
so that the possible simple poles of $ \Lambda (s, \As(f)) $ at $s = 0, 1$ do not contribute a residual term.    It is clear that the weight function $  V_{\vkappa} (y)$ is real-valued and, by \cite[Proposition 5.4]{IK} (it is easy to see that the proof therein works for our choice of test function), its derivatives satisfy the following bound:
\begin{align} \label{2eq: bound for Vk(y)}
	y^{j} V_{\vkappa}^{(j)} (y) \Lt_{j, A} \min \big\{ 1,  (    {\vkappa} / {y}  )^{ A} \big\} , 
\end{align}
for any $j, A \geqslant 0$. Thus we may effectively restrict the summation in \eqref{eq: AFE} to the range $ n \Lt k^{1+\vepsilon}$ at the cost of a negligible error.   

\subsection{Petersson Trace Formula for Hilbert Cusp Forms}\label{subsec: Petersson formula}

For $\vnu_1,\vnu_2\in\SO^+$, the Petersson trace formula for Hilbert cusp forms for  $\SL_2(\SO)$ reads (see  \cite{Luo-2003-Hilbert,Luo-Asai}): 
\begin{equation}\label{eq: Petersson}
	\begin{split}
		\frac{1}{(k-1)^2 D}\sum_{f\in H_k}\omega_f\lambda_f(\vnu_1)\lambda_f(\vnu_2)
		& =  \sqrt{D} \delta(\vnu_1\sim\vnu_2) \\
		&+2\pi^2 \sum_{c\, \in\SO\smallsetminus\{0\}}
		\frac{S_{\bfF}(\vnu_1,\vnu_2;c)}{|N(c)|}
		NJ_{k-1}\bigg(\frac{4\pi\sqrt{\vnu_1 \vnu_2}}{|c|}\bigg),
	\end{split}
\end{equation}
where $\delta (\vnu_1 \sim \vnu_2)$ is the Kronecker $\delta$ symbol for $ \vnu_1 \sim \vnu_2 $ (by definition, this stands for $(\vnu_1) = (\vnu_2)$ or $\vnu_1 / \vnu_2 \in \mathrm{U}^2 $), and
\begin{equation*}
	NJ_{k-1}\bigg(\frac{4\pi\sqrt{\vnu_1 \vnu_2}}{|c|}\bigg)
	=J_{k-1}\bigg(\frac{4\pi\sqrt{\vnu_1 \vnu_2}}{|c|}\bigg)
	J_{k-1}\bigg(\frac{4\pi\sqrt{\vnu_1'\vnu_2'}}{|c'|}\bigg).
\end{equation*}

\begin{remark}\label{rem: U/pm1}
	Note that the sum over $ \epsilon \in \mathrm{U} $ in \cite[(9)]{Luo-2003-Hilbert} or \cite[(2)]{Luo-Asai} should instead be taken over  $ \epsilon \in \mathrm{U} /\{ \pm 1 \} $ since Luo inadvertently miscounted the center $\{ \pm \boldsymbol{1}_2 \}$ of $ \SL_2 (\SO) $. Actually, the formula \cite[(9)]{Luo-2003-Hilbert} for $\mathbf{F} = \mathbf{Q}$ differs slightly from the classical Petersson formula as in \cite[(14.15)]{IK}. 
\end{remark}

\begin{remark}
	For simplicity,  we have combined the sums over $\epsilon \in \mathrm{U} / \{ \pm 1 \}$ {\rm(}see Remark {\rm\ref{rem: U/pm1}}{\rm)} and $ c \in  (\SO   \smallsetminus \{0\}) / \mathrm{U}  $ in Luo's Petersson formula \cite[(2)]{Luo-Asai} {\rm(}Luo's notation $\SO^{\times}$ denotes $\SO \smallsetminus\{0\} $ but it usually means $\mathrm{U}$ in the literature{\rm)} into the sum over $ c \in \SO   \smallsetminus \{0\} $ in \eqref{eq: Petersson}. 
\end{remark}

\subsection*{Abbreviation} For brevity, we shall henceforth  write  
\begin{align}
	\vkappa = k -1 . 
\end{align}

\subsection{Properties of Bessel Functions} Next, we recollect some results of the Bessel function $J_{\vkappa} (x)$ of  positive argument $x \in \BR_+$ and integral order $\vkappa \in \mathbf{Z}_{+}$. 

Firstly, in some preliminary analysis, we shall need the uniform crude bounds 
\begin{align}\label{2eq: prelim bounds}
	 J_{\vkappa}(x)  \Lt \min \bigg\{ 1,  \Big( \frac{e x}{2 \vkappa}\Big)^{\vkappa} \bigg\}, 
\end{align}
which, by trivial estimation (along with the Stirling  formula), may be deduced from the Bessel and Poisson  integral representations (see \cite[2.2 (2), 3.3 (1)]{Watson}):  \begin{align*}%\label{2eq: Bessel's integral}
	J_{\vkappa}(x) = \frac{1}{\pi} \int_{0}^{\pi} \cos(\vkappa \theta - x \sin\theta) \nd \theta.
\end{align*}
\begin{align*}
	J_{\vkappa}(x) = \frac{  (x/2)^{\vkappa} }{ \sqrt{\pi}\, \Gamma(\vkappa+1/2)  } \int_{0}^{\pi } \cos(x \cos\theta) \sin^{2\vkappa}\theta \, \nd \theta.
\end{align*}
It is clear from \eqref{2eq: prelim bounds}  that $J_{\vkappa}(x)$ is negligibly small unless $x > \vkappa/2$.

Our analysis will rely crucially on the  Macdonald integral representation (\cite[13.7(1)]{Watson}):
\begin{equation}\label{2eq: Macdonald}
	J_{\vkappa} (x) J_{\vkappa} (y) = \frac{1}{2\pi i^{\vkappa+1}} \int_{ {-\infty}}^{\infty}  { \exp \bigg( \frac {i} {2} \bigg( {r}  + \frac{x^2 + y^2}{  r}\bigg) \bigg) J_{\vkappa} \Big( \frac{xy}{ r}   \Big) } \frac{\nd r}{r},
\end{equation}
for $x, y \in \BR_{+}  $. 

Our analysis for zero frequencies will require the following   Mellin--Barnes integral representation \cite[(10.9.29)]{NIST}:
\begin{equation}\label{eq: MB Bessel square}
	J_{\vkappa}(x)^2=\frac{1}{2\pi i}\int_{(\sigma)}
	\frac{\Gamma(1+2s)\Gamma(\vkappa-s)}{\Gamma(1+s)^2\Gamma(\vkappa+1+s)}
	\Big(\frac{x}{2} \Big)^{2s}\nd s,
	\qquad - \frac 1 2 <\sigma<\vkappa. 
\end{equation}

Moreover, as a special case of \cite[(10.22.57)]{NIST},  
\begin{equation}
	\label{2eq: integral of J(x)2}
	\int_0^{\infty} J_{\vkappa}(x)^2 \frac {\nd x} {x} = \frac 1 {2\vkappa}. 
\end{equation}

Finally, we record here a variant of \cite[Lemma B.1]{Qi-GL(3)}, which collects several results for $ J_{\vkappa} (\vkappa x) $ derived from the Olver asymptotic formula  \cite{Olver-1,Olver-Bessel}. 

		\begin{lem}\label{lem: Olver}
		For $x > 1$ define
		\begin{align}\label{2eq: gamma}
			\gamma (x) = \sqrt{x^2-1} - \mathrm{arcsec}\, x  . 
		\end{align}
		
		{\rm (1)} For $|x-1| \leqslant 1/\vkappa^{2/3}$,  we have $ J_{\vkappa} (\vkappa x) = O (1 /\vkappa^{1/3}) $. 
		
		{\rm (2)} For $1/2 < x < 1 - 1/\vkappa^{2/3} $, we have
		\begin{align}\label{12eq: Bessel, smaller, K}
			J_{\vkappa} (\vkappa x) = O  \bigg(  \frac{\exp  \big(  \!  -    \vkappa/3 \cdot (2-2x) ^{3/2}   \big) }{  \vkappa^{1/2} (1-x)^{1/4}  } \bigg).
		\end{align} 
		
		{\rm(3$'$)} For $ 1 + 1/ \vkappa^{2/3} < x \Lt \vkappa^{13/3}$, we have
		\begin{align}\label{Beq: Olver}
			J_{\vkappa} (\vkappa x) = \sqrt{2} \sum_{ \pm } \frac { \exp ( \pm i \vkappa \gamma (x) ) } {\vkappa^{1/2} (x^2-1)^{1/4} } W_{\pm} (\vkappa \gamma (x))  + O\bigg(\frac{1}{\vkappa x } \bigg) ,
		\end{align}
		in which $\gamma^j W_{\pm}^{(j)}(\gamma)\Lt_j 1$ for $\gamma \Gt 1$. 
		 
	\end{lem}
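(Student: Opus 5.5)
This lemma is a repackaging of \cite[Lemma B.1]{Qi-GL(3)}, so the plan is to derive each part from Olver's uniform asymptotic expansion of $J_{\vkappa}(\vkappa x)$ in terms of the Airy function (\cite{Olver-1,Olver-Bessel}):
\begin{align*}
J_{\vkappa}(\vkappa x) = \Big(\frac{4\zeta}{1-x^2}\Big)^{1/4}\bigg(\frac{\mathrm{Ai}(\vkappa^{2/3}\zeta)}{\vkappa^{1/3}}\Big(1+O\Big(\frac1{\vkappa}\Big)\Big) + \frac{\mathrm{Ai}'(\vkappa^{2/3}\zeta)}{\vkappa^{5/3}} O(1)\bigg),
\end{align*}
valid uniformly for $x>0$. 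Here $\zeta=\zeta(x)$ is defined by $\tfrac23\zeta^{3/2}=\log\frac{1+\sqrt{1-x^2}}{x}-\sqrt{1-x^2}$ for $x\le1$, and by $\tfrac23(-\zeta)^{3/2}=\gamma(x)$ for $x\geqslant 1$. In each range of $x$ we insert the standard asymptotics of $\mathrm{Ai}$ and $\mathrm{Ai}'$ (growth/decay at $+\infty$, oscillation at $-\infty$) and Taylor expand $\zeta$ where needed.

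\textbf{Part (1).} For $|x-1|\leqslant\vkappa^{-2/3}$, $\zeta$ is analytic near $1$ with $\zeta\approx 2^{1/3}(1-x)$. Hence $\vkappa^{2/3}\zeta=O(1)$, so $\mathrm{Ai}$ and $\mathrm{Ai}'$ are bounded there. The prefactor $(4\zeta/(1-x^2))^{1/4}$ is also bounded, and the bound $O(\vkappa^{-1/3})$ follows at once.

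\textbf{Part (2).} For $1/2<x<1-\vkappa^{-2/3}$ we have $\vkappa^{2/3}\zeta\gg1$, and we use
\begin{align*}
\mathrm{Ai}(t)\ll t^{-1/4}e^{-\frac23t^{3/2}}, \qquad \mathrm{Ai}'(t)\ll t^{1/4}e^{-\frac23 t^{3/2}}.
\end{align*}
The Airy term then gives
\begin{align*}
\vkappa^{-1/3}(\vkappa^{2/3}\zeta)^{-1/4}\,(\zeta/(1-x))^{1/4}\, e^{-\vkappa\frac23\zeta^{3/2}} \asymp \vkappa^{-1/2}(1-x)^{-1/4}e^{-\vkappa\frac23\zeta^{3/2}}.
\end{align*}
It remains to show $\tfrac23\zeta^{3/2}\geqslant\tfrac13(2-2x)^{3/2}$ on this range. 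Writing $u=\sqrt{1-x^2}$, the left side is $\operatorname{artanh}u-u\geqslant u^3/3$. Since $1-x^2=(1-x)(1+x)\geqslant 2(1-x)\cdot\tfrac34$ for $x>1/2$, this gives $u^3/3\geqslant\tfrac13(\tfrac32(1-x))^{3/2}$. That only yields the constant $3/2$ in place of $2$, so a sharper comparison is needed. The sharper estimate comes from $\operatorname{artanh}u-u\geqslant u^3/3+u^5/5$, together with the monotonicity in $x$ of the ratio of the two exponents, checked at the endpoints $x=1/2$ and $x\to1$ (where the ratio tends to $1$). The $\mathrm{Ai}'$ term is smaller by a factor $\zeta^{1/2}/\vkappa\ll1$. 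This calculus inequality with the exact constant $2$ is the only delicate point; if the direct comparison fails near $x=1/2$, I would fall back on the exact Debye/Carlini bound $J_{\vkappa}(\vkappa\,\mathrm{sech}\,\alpha)\leqslant e^{-\vkappa(\alpha-\tanh\alpha)}/\sqrt{2\pi\vkappa\tanh\alpha}$ instead.

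\textbf{Part (3$'$).} For $x>1+\vkappa^{-2/3}$ we have $-\vkappa^{2/3}\zeta\gg1$, and $\vkappa(\frac23(-\zeta)^{3/2})=\vkappa\gamma(x)$. We use the oscillatory expansions
\begin{align*}
\mathrm{Ai}(-t)=\pi^{-1/2}t^{-1/4}\,\mathrm{Re}\big(e^{i(\frac23t^{3/2}-\pi/4)}w(\tfrac23 t^{3/2})\big),
\end{align*}
with the analogous formula for $\mathrm{Ai}'$, where $w$ is a symbol satisfying $\gamma^jw^{(j)}(\gamma)\ll_j1$ for $\gamma\gg1$. The prefactor becomes $(4\zeta/(1-x^2))^{1/4}\vkappa^{-1/3}(\vkappa^{2/3}(-\zeta))^{-1/4}=\sqrt2\,\vkappa^{-1/2}(x^2-1)^{-1/4}$. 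Absorbing the constant phases and the $\mathrm{Ai}'$ correction into $W_\pm(\vkappa\gamma(x))$ gives the stated main terms with the required symbol bounds.

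The $O(1/\vkappa)$ relative error in Olver's formula contributes $O(\vkappa^{-3/2}(x^2-1)^{-1/4})$. For bounded $x$ this is $O(1/(\vkappa x))$. For large $x$, the uniformity of Olver's error terms only holds with error $\ll \vkappa^{-1}\cdot(\text{main})$, which is $\ll\vkappa^{-3/2}x^{-1/2}$. This is bounded by $1/(\vkappa x)$ exactly when $x\ll\vkappa$, not up to $\vkappa^{13/3}$. So for $\vkappa\ll x\ll\vkappa^{13/3}$ I would switch instead to Hankel's expansion of $J_{\vkappa}(X)$, $X=\vkappa x$, keeping enough terms $N$ that the remainder $(\vkappa^2/X)^{N}X^{-1/2}$ is $\ll1/X$. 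Then I would rewrite the phase $X-\vkappa\pi/2-\pi/4$ plus the correction series as $\vkappa\gamma(x)$ plus a smooth symbol, again absorbed into $W_\pm$. This matching between the two regimes is the main obstacle, and is where \cite[Lemma B.1]{Qi-GL(3)} would be followed closely.
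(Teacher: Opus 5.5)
Your route through Olver's Airy-type expansion is the one behind the paper's citation of \cite[Lemma B.1]{Qi-GL(3)}; the paper gives no argument of its own. Parts (1) and (2) go through. In (2) the constant $2$ is not delicate. Both $\frac23\zeta^{3/2}=\operatorname{artanh}u-u$ and $\frac13(2-2x)^{3/2}$ vanish at $x=1$, and their $x$-derivatives are $-\sqrt{1-x^2}/x$ and $-\sqrt{2-2x}$. Since $\sqrt{1-x^2}/x\geqslant\sqrt{2-2x}$ is equivalent to $(1-x)(1+2x)\geqslant 0$, the inequality holds on all of $(0,1]$ and no fallback is needed.

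The gap is in (3$'$) for large $x$, and it rests on a false premise. Olver's expansion is not limited to relative error $O(\vkappa^{-1})$: it can be truncated at any order, with remainder $O(\vkappa^{-2n-1})$ relative to the envelope $\vkappa^{-1/3}M(\vkappa^{2/3}\zeta)$, uniformly for $x\in(0,\infty)$. The Hankel substitute you propose does not close the argument.
\begin{itemize}
\item For $X\asymp\vkappa^2$ the ratio $\vkappa^2/X$ is not small. So for each fixed $N$ the Hankel remainder $(\vkappa^2/X)^N X^{-1/2}$ fails to be $\ll 1/X$ on a window $\vkappa\ll x\leqslant \vkappa^{1+\delta_N}$. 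On that same window your Olver remainder $\vkappa^{-3/2}x^{-1/2}$ already exceeds $1/(\vkappa x)$, so neither method covers it.
\item The matching is not an absorption. The discrepancy $\vkappa\gamma(x)-(X-\vkappa\pi/2)=\vkappa/(2x)+O(\vkappa/x^{3})$ is a function of $\vkappa/x$, not of $\vkappa\gamma(x)$, so it cannot be put into $W_\pm(\vkappa\gamma(x))$. It is also not within $O(1/(\vkappa x))$ unless $x\gg\vkappa^3$. You would have to resum the Hankel corrections against $\vkappa\gamma(x)$, in effect rederiving Debye's expansion, and that is exactly the step you defer.
\item The same objection applies to absorbing the $\mathrm{Ai}'$ correction into $W_\pm$. $B_0(\zeta)$ depends on $x$ independently of $\vkappa\gamma(x)$. $W_\pm$ can only come from the exact representation $\mathrm{Ai}(-t)=t^{-1/4}\sum_\pm e^{\pm i\xi}w_\pm(\xi)$ with $t=\vkappa^{2/3}(-\zeta)$ and $\xi=\frac23 t^{3/2}=\vkappa\gamma(x)$.
\end{itemize}

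The repair is to stay with Olver and keep the terms $A_0$, $A_1$, $B_0$. The remainder is then $\ll \vkappa^{-3}\cdot\vkappa^{-1/2}(x^2-1)^{-1/4}$, which is $\ll 1/(\vkappa x)$ for all $x\ll\vkappa^{5}$ and so covers $x\ll\vkappa^{13/3}$. The extra terms go into the error term, not into $W_\pm$:
\begin{itemize}
\item The $\mathrm{Ai}'$ term is $\ll \vkappa^{-3/2}|\zeta|^{1/2}|B_0(\zeta)|(x^2-1)^{-1/4}$. This is $\ll\vkappa^{-4/3}$ for $1+\vkappa^{-2/3}<x\leqslant 2$, and $\ll\vkappa^{-3/2}x^{-3/2}$ for $x\geqslant 2$ because $B_0(\zeta)\ll x^{-4/3}$ there.
\item The $A_1$ term is $\ll\vkappa^{-7/3}$ for $x\leqslant 2$, and $\ll\vkappa^{-5/2}x^{-5/2}$ for $x\geqslant 2$ because $A_1(\zeta)\ll x^{-2}$ there.
\end{itemize}
Alternatively, use Debye's expansion for $x\geqslant 2$, whose phase is exactly $\vkappa\gamma(x)-\pi/4$. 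Note also that the paper only applies (3$'$) with $x\ll\vkappa^{\vepsilon}$. In that range your one-term Olver argument already suffices once the $\mathrm{Ai}'$ term is moved to the error.
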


\begin{remark}
	By (3) in \cite[Lemma B.1]{Qi-GL(3)},  for $ 1 + 1/ \vkappa^{2/3} < x \leqslant 2$, the error term in {\rm\eqref{Beq: Olver}} may be improved into $ O \big(1 / \vkappa^{7/6} (x-1)^{1/4} \big) $, but it will be sufficient to use the uniform error bound $ O (1/\vkappa x) $.  
\end{remark}
	
	\section{Initial Reductions}\label{sec: initial steps}
	
	Consider 
	\begin{equation}\label{3eq: definition of Mk}
		\EuScript{M}_k = \sum_{f\in H_k}\omega_f L(1/2,\As(f))^2.
	\end{equation}
By the approximate functional equation for $L(1/2,\As(f))$ as in \eqref{eq: AFE},  
\begin{equation*}
	\EuScript{M}_k= 4 \mathop{\sum\sum}_{n_1,n_2\in\BZ_+}
	\frac{1}{\sqrt{n_1n_2}}V_{k-1} \bigg(\frac{n_1}{\sqrt D}\bigg)
	V_{k-1} \bigg(\frac{n_2}{\sqrt D}\bigg)
	\sum_{f\in H_k}\omega_f\lambda_f(n_1)\lambda_f(n_2).
\end{equation*}

\subsection{Application of the Petersson Formula} 
By the Petersson trace formula in \eqref{eq: Petersson}, with $\vnu_1=n_1$ and $\vnu_2=n_2$, we obtain
\begin{align}\label{3eq: M=D+O}
	\EuScript{M}_k = 4    \vkappa^2 D \bigg( \sqrt{D} \cdot \EuScript{D}_{\vkappa} +  \pi i^{\vkappa+1}   \sum_{c\, \in\SO\smallsetminus\{0\}}\frac{\EuScript{O}_{\vkappa}(c)}{|N(c)|} \bigg), \qquad \vkappa = k-1, 
\end{align}
where 
\begin{align}\label{3eq: Dk}
	 \EuScript{D}_{\vkappa} = \sum_{n\in\BZ_+}\frac{1}{n} V_{\vkappa} \bigg(\frac{n}{\sqrt D}\bigg)^2, 
\end{align}
\begin{equation}\label{eq: Ok(c)}
	\EuScript{O}_{\vkappa}(c)= 2\pi i^{\vkappa+1} \! \mathop{\sum\sum}_{n_1, n_2\in\BZ_+} \! \frac{V_{\vkappa} (n_1, n_2)}{\sqrt{n_1 n_2}} S_{\bfF}(n_1, n_2;c) NJ_{\vkappa}\bigg(\frac{4\pi\sqrt{n_1 n_2}}{|c|}\bigg), 
\end{equation}
and  
\begin{align}\label{3eq: Vk(x1,x2)}
	V_{\vkappa} (x_1, x_2) =	V_{\vkappa} \bigg(\frac{x_1}{\sqrt D}\bigg)V_{\vkappa}\bigg(\frac{x_2}{\sqrt D}\bigg) . 
\end{align}
In view of the bounds for $ V_{\vkappa} (y) $ and $J_{\vkappa} (x)$ as in \eqref{2eq: bound for Vk(y)} and \eqref{2eq: prelim bounds}, we may restrict the $c$-sum %and the double $n_1, n_2$-sum 
effectively to 
\begin{equation}\label{eq: range of c}
	|c|,  |c'| \Lt k^{\vepsilon};  %\quad n_1, n_2 \Lt k^{1+\vepsilon}; 
\end{equation}
outside this range, the total contribution is negligible.  

\subsection{Application of the Poisson Summation} 
Now we apply the Poisson summation formula to both the $n_1$- and $n_2$-sums in  \eqref{eq: Ok(c)} modulo $N(c) \cdot \BZ$,   obtaining 
\begin{align}\label{3eq: O(c)=Z(c)+O(c)}
	\EuScript{O}_{\vkappa}(c)= \EuScript{Z}_{\vkappa}(c) + \widetilde{\EuScript{O}}_{\vkappa}(c) , 
\end{align}
where $ \EuScript{Z}_{\vkappa}(c)  $ is the zero frequency:
\begin{align}\label{3eq: Z(c)}
	\EuScript{Z}_{\vkappa}(c) = T (0, 0; c) I_{\vkappa} (0, 0; c), 
\end{align}
and  $ \widetilde{\EuScript{O}}_{\vkappa}(c)  $ is   the dual sum on $\BZ^2 \smallsetminus \{(0, 0)\} $:
\begin{align}\label{3eq: O(c), dual}
	\widetilde{\EuScript{O}}_{\vkappa}(c) = \mathop{\mathop{\sum \sum}_{ n_1   , n_2     \in \BZ  }}_{(n_1   , n_2   ) \neq (0, 0)} T (n_1   , n_2   ; c) I_{\vkappa} (n_1   , n_2   ; c) , 
\end{align}
with 
	\begin{equation}\label{3eq: T(m1,m2;c)}
	T(n_1   ,n_2   ;c)=\frac{1}{N(c)^2}\mathop{\sum\sum}_{a_1,a_2  (\mathrm{mod}\, N(c) \BZ)}
	S_{\bfF}(a_1,a_2;c)e\bigg(\frac{a_1 n_1   +a_2 n_2   }{|N(c)|}\bigg),
\end{equation}
\begin{equation}\label{3eq: Ik(m1,m2;c)}
	I_{\vkappa} (n_1   , n_2   ; c) = 2\pi i^{\vkappa+1} \! \int\!\!\int_{\BR_+^2} \! V_{\vkappa} (x_1, x_2)  NJ_{\vkappa}\bigg(\frac{4\pi\sqrt{x_1 x_2}}{|c|}\bigg) e \bigg( \! - \frac {n_1    x_1 + n_2    x_2} {|N(c)|}  \bigg) \frac {\nd x_1 \nd x_2} {\sqrt{x_1 x_2}} .
\end{equation}
Further, in order to facilitate our analysis for the integral $ I_{\vkappa} (n_1   , n_2   ; c) $, we introduce dyadic partitions for both the $x_1$- and $x_2$-integrals; the partitioned integral reads
\begin{align}\label{3eq: IX(n;c)}
	\begin{aligned}
		  I_{X_1, X_2} & (n_1   , n_2   ; c) = \frac{2\pi i^{\vkappa+1}}{\sqrt{X_1 X_2}} \\
	& \cdot \int\!\!\int_{\BR_+^2} \! \varww_{X_1, X_2} (x_1, x_2)  NJ_{\vkappa}\bigg(\frac{4\pi\sqrt{x_1 x_2}}{|c|}\bigg) e \bigg( \! - \frac {n_1    x_1 + n_2    x_2} {|N(c)|}  \bigg)   {\nd x_1 \nd x_2},
	\end{aligned}
\end{align} 
for $ \varww_{X_1, X_2} (x_1, x_2) \in C_c^{\infty} ([X_1, 2 X_1] \times [X_2, 2 X_2]) $ such that $ X_1^{j_1} X_2^{j_2} \varww_{X_1, X_2}^{(j_1, j_2)} (x_1, x_2) \Lt_{j_1, j_2}  1$. More explicitly, 
\begin{align*}
	  \varww_{X_1, X_2} (x_1, x_2) = \frac {\sqrt{X_1 X_2}} {\sqrt{x_1 x_2}} V_{\vkappa} (x_1, x_2)  \varvv \Big( \frac {x_1} {X_1} \Big) \varvv \Big( \frac {x_2} {X_2} \Big) , 
\end{align*} 
for a suitable (real-valued) fixed $\varvv (x) \in C_c^{\infty} [1, 2]$.   Given that $|c|,  |c'| \Lt k^{\vepsilon}$ as in \eqref{eq: range of c},  it follows again from the bounds for $ V_{\vkappa} (y) $ and $J_{\vkappa} (x)$ in \eqref{2eq: bound for Vk(y)} and \eqref{2eq: prelim bounds} that we may restrict the parameters $X_1$ and $X_2$ effectively to the range 
\begin{equation}\label{3eq: range of X}
k^{1-\vepsilon} \Lt	X_1, X_2 \Lt k^{1+\vepsilon} . 
\end{equation}

	\subsection{Application of the Macdonald Integral Representation}\label{subsec: Macdonald representation}

Let $|c|,  |c'| \Lt k^{\vepsilon}$ as in \eqref{eq: range of c}.  For the dual sum $ \widetilde{\EuScript{O}}_{\vkappa}(c) $ defined in \eqref{3eq: O(c), dual}, let the contribution from $I_{X_1, X_2}   (n_1   , n_2   ; c)$ be denoted by $ \widetilde{\EuScript{O}}_{X_1, X_2} (c) $, that is, 
\begin{align}\label{3eq: OX(c), dual}
	\widetilde{\EuScript{O}}_{X_1, X_2}   (c) = \mathop{\mathop{\sum \sum}_{ n_1   , n_2     \in \BZ  }}_{(n_1   , n_2   ) \neq (0, 0)} T (n_1   , n_2   ; c) I_{X_1, X_2}  (n_1   , n_2   ; c) . 
\end{align}
Of course, the trivial bound  \begin{align}\label{3eq: bound for T (n;c)}
	 T (n_1, n_2; c) = O (|N(c)|) = O (k^{\vepsilon})  
\end{align}  will be sufficient for our purpose.  Next we apply the Macdonald integral formula in \eqref{2eq: Macdonald} in the form
 	\begin{equation}\label{eq: Macdonald Asai form}
 	NJ_{\vkappa}\bigg(\frac{4\pi\sqrt{x_1 x_2}}{|c|}\bigg)
 	=\frac{1}{2\pi i^{\vkappa+1}}\int_{-\infty}^{\infty}
 	e\bigg(\frac{x_1 x_2 }{|N(c)|r}+\Tr  |c'/c  |  r \bigg)J_{\vkappa}(4\pi r)\frac{\nd r}{r}.
 \end{equation}
On inserting this into \eqref{3eq: IX(n;c)} and \eqref{3eq: OX(c), dual}, it follows that 	
\begin{equation}\label{eq: integral of L_k}
	\widetilde{\EuScript{O}}_{X_1, X_2}  (c)=\int_{-\infty}^{\infty}
	e (\Tr|c'/c|  r )J_{\vkappa}(4\pi r)
	\EuScript{Q}_{X_1, X_2}(r;c)\frac{\nd r}{r},
\end{equation}
where
\begin{equation}\label{3eq: Q (r; c)} 
	\EuScript{Q}_{X_1, X_2}(r;c)= \mathop{\mathop{\sum \sum}_{ n_1   , n_2     \in \BZ  }}_{(n_1   , n_2   ) \neq (0, 0)} T (n_1, n_2; c) 
	I_{X_1, X_2} (n_1, n_2; r;  |N(c)| ) , 
\end{equation} 
with 
\begin{equation}\label{3eq: IX (n;r;c)}
	I_{X_1, X_2} (n_1, n_2; r;  q) =\frac{1}{\sqrt{X_1 X_2}}\iint
	\varww_{X_1, X_2} (x_1,x_2)e\bigg(\frac{x_1x_2/r-n_1 x_1-n_2 x_2}{q} \bigg)\nd x_1\nd x_2.
\end{equation}
Trivially, it follows from \eqref{3eq: bound for T (n;c)}, \eqref{eq: integral of L_k}, and \eqref{3eq: Q (r; c)} that
\begin{align}\label{3eq: dual O(c)}
	\widetilde{\EuScript{O}}_{X_1, X_2}  (c) \Lt k^{\vepsilon} \! \int_0^{\infty} | J_{\vkappa}(4\pi r) | \cdot   \EuScript{Q}^{\flat}_{X_1, X_2}(r;c)   \frac{\nd r}{r}, 
\end{align}
where 
\begin{align}\label{3eq: dual Q(r;c)}
	\EuScript{Q}^{\flat}_{X_1, X_2}(r;c) = \mathop{\mathop{\sum \sum}_{ n_1   , n_2     \in \BZ  }}_{(n_1   , n_2   ) \neq (0, 0)}  
\big| I_{X_1, X_2} (n_1, n_2; r;  |N(c)| ) \big|. 
\end{align}
By \eqref{2eq: prelim bounds}, we may truncate the $r$-integral at $r = \vkappa / 8\pi$ due to the decay of $ J_{\vkappa}(4\pi r) $ in \eqref{3eq: dual O(c)}.

\subsection{Reductions} 
	
	The rest of this paper will be devoted mainly to the proof of the following asymptotic formulae or bounds for the diagonal sum $ \EuScript{D}_{\vkappa}  $, the  zero frequencies $\EuScript{Z}_{\vkappa}(c) $ (for $c$ rational or not), and the dual sums $ \widetilde{\EuScript{O}}_{X_1, X_2}  (c) $. 
	
	\begin{defn}
	Let $\psi (s) = \Gamma' (s) / \Gamma (s)$ be the di-gamma function. Define the constants
		 \begin{equation}\label{3eq: constants psi}
		 	\psi_{j} = \psi^{(j)} (1/2) =   (-1)^{j+1} j! (2^{j+1}- 1) \zeta (j+1),  
		 \end{equation}
	 for $ j \in \BZ_+${\rm(}see \cite[\S 1.2]{MO-Formulas}{\rm)}, and the Stieltjes constants $\gamma_{j}$  by \begin{equation}\label{eq: Stieltjes constants definition}
	 	\zeta(1+s)=\frac1s+\sum_{j=0}^{\infty}\frac{(-1)^j\gamma_j}{j!}s^j,
	 	\qquad \gamma_0=\gamma. 
	 \end{equation} 
	\end{defn}
	
	\begin{lem}\label{lem: diag}
		We have 
		\begin{equation}\label{3eq: Dk = Q3+CD}
			 \EuScript{D}_{\vkappa} = Q_3 \bigg( \! \log \frac{k \sqrt D}{16\pi^2} \bigg) + C + O_{D, \vepsilon } \bigg(\frac {k^{\vepsilon} } { \sqrt{k}} \bigg), 
		\end{equation}
	for %$ Q_3 (Y)$ is a polynomial of degree $3$, explicitly given by 
	\begin{equation}\label{3eq: Q3(X)}
		Q_3 (X) =  	\frac 1 {12} X^3 + \frac {\gamma} 2 X^2 + \frac {3\gamma^2 - 2 \gamma_1} 4 X - \frac {8\gamma^3 + 60 \gamma \gamma_1 + 18 \gamma_2 + \psi_2 } {24}  ,
	\end{equation}
\begin{equation}\label{3eq: CD, diag}
		C = - \frac 1 {2\pi i} \int_{(1)}  \xi ( 2v ) \xi (1+2v)  G (v)^2 \frac {\nd v} {v^2}, 
\end{equation}
where as usual  $ \xi (s) = \pi^{-s/2} \Gamma (s/2) \zeta (s) $. 
	\end{lem}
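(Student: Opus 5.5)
We insert the Mellin integral \eqref{eq: V(y)} for each of the two factors $V_{\vkappa}(n/\sqrt D)$ in \eqref{3eq: Dk} and execute the $n$-sum first. Concretely, on the lines $\mathrm{Re}(v_1)=\mathrm{Re}(v_2)=1$ we have
\begin{equation*}
	\EuScript{D}_{\vkappa} = \frac{1}{(2\pi i)^2}\int_{(1)}\int_{(1)} \zeta(1+2v_1)\zeta(1+2v_2)\zeta(1+v_1+v_2)\, \Gamma_{\vkappa}(v_1)\Gamma_{\vkappa}(v_2) G(v_1)G(v_2) D^{(v_1+v_2)/2}\frac{\nd v_1\, \nd v_2}{v_1 v_2},
\end{equation*}
where $\Gamma_{\vkappa}(v)=\gamma_{\vkappa}(1/2+v)/\gamma_{\vkappa}(1/2)$. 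By Stirling's formula, uniformly for $v$ in vertical strips with $|v|\leqslant \vkappa^{\vepsilon}$,
\begin{equation*}
	\Gamma_{\vkappa}(v)=(2\pi)^{-2v}\vkappa^{v}\frac{\Gamma(1/2+v)}{\Gamma(1/2)}\big(1+O(|v|^2/\vkappa)\big).
\end{equation*}
Since $G$ decays like $\exp(-|\mathrm{Im}\, v|^2)$, we may truncate to $|\mathrm{Im}\, v_j|\leqslant k^{\vepsilon}$ and replace $\Gamma_{\vkappa}$ by this main term. Shifting lines to $\mathrm{Re}\, v_j = -1/4+\vepsilon$ (see the next paragraph), the error costs $O(k^{-1}\cdot k^{-1/2+\vepsilon})$. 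We also replace $\vkappa$ by $k$, which contributes only $O(1/k)$ inside the powers. So the main object is the same double integral with $\Gamma_{\vkappa}(v)$ replaced by $(k/4\pi^2)^{v}\Gamma(1/2+v)/\Gamma(1/2)$.

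Next we shift contours. First move $v_2$ to $\mathrm{Re}(v_2)=-1/4$. This crosses a pole at $v_2=0$ (a double pole, from $1/v_2$ and $\zeta(1+2v_2)$) and a pole at $v_2=-v_1$ from $\zeta(1+v_1+v_2)$. The latter is harmless: $G(-v_1)=0$ has no zero at generic $v_1$, but the structure $v_1\leftrightarrow -v_1$ makes this residue collapse. On $v_2=-v_1$ the integrand in $v_1$ becomes
\begin{equation*}
	\zeta(1+2v_1)\zeta(1-2v_1)\frac{\Gamma(1/2+v_1)\Gamma(1/2-v_1)}{\pi}G(v_1)^2\frac{-1}{v_1^2},
\end{equation*}
in which the $k$- and $D$-powers cancel. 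After using the functional equation $\zeta(1-2v)=\xi(2v)\pi^{1/2-v}/\Gamma(1/2-v)$ (a routine rearrangement), this is exactly the $k$-independent constant $C$ of \eqref{3eq: CD, diag}. One must check that the factor $\xi(2v)\xi(1+2v)$ is reproduced and that the sign matches; $G(\pm1/2)=0$ kills the poles of $\xi(2v)$ at $v=\pm 1/2$ in any further shift.

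Then we take the residue at $v_2=0$ and shift $v_1$ to $\mathrm{Re}(v_1)=-1/4$. The residue at $v_1=0$ of the resulting function, namely $v_1^{-1}\zeta(1+2v_1)\zeta(1+v_1)\cdot(\text{Laurent data at } v_2=0)$ times the exponential $(k\sqrt D/4\pi^2)^{v_1}$, produces a pole of total order $4$ at the origin. Its residue is a cubic polynomial in $\log(k\sqrt D/4\pi^2)$ whose coefficients involve $\gamma,\gamma_1,\gamma_2$, $\psi(1/2)$, $\psi_1$, $\psi_2$ and $G''(0)$.

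The main obstacle is the bookkeeping that converts this residue into $Q_3(\log(k\sqrt D/16\pi^2))$ exactly as in \eqref{3eq: Q3(X)}. Note that $\psi(1/2)=-\gamma-2\log2$, so the $\log 4$ gets absorbed, turning $4\pi^2$ into $16\pi^2$. The contributions of $\psi_1$ and of $G$ (note $G(v)=1-3v^2+\cdots$) must cancel or combine against terms like $\zeta(2)$. I would organize the computation by writing every factor as $\exp(\text{power series})$ and expanding to third order, using the symmetry in $v_1,v_2$ to simplify.

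Finally, the remaining integrals on $\mathrm{Re}=-1/4$ are $O(k^{-1/4}\cdot k^{-1/4})$: the $v_2$-integral shifted beyond the origin carries $k^{-1/4}$, and so does the $v_1$-integral. Combining the two shifts carefully (leftover $v_1$ on $-1/4$ with $v_2$ on $-1/4$, or with $v_2$ at the residue), each remainder is $\Lt k^{-1/4}$ at worst. So to reach $k^{-1/2+\vepsilon}$, I would instead shift to $\mathrm{Re}=-1/2+\vepsilon$. This is allowed since $\Gamma(1/2+v)$ is regular there, $\zeta(1+2v)$ is fine for $\mathrm{Re}\,v>-1/2$, and $\zeta(1+v_1+v_2)$ stays away from its pole once the $v_2=-v_1$ residue is extracted. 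This gives the error $O(k^{\vepsilon}/\sqrt k)$.
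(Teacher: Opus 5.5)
Your proposal is correct and follows essentially the paper's argument: the double Mellin integral, Stirling, and contour shifts to $\mathrm{Re}=-1/2+\vepsilon$. The simple pole at $v_2=-v_1$ gives $C$ via $\xi(2v)=\xi(1-2v)$; it does not ``collapse'', and you should first move $v_1$ to $\mathrm{Re}=\vepsilon$ so that this pole is actually crossed. The double pole at $v_2=0$ leaves $\tfrac12\big(\zeta'(1+v_1)+(L+\gamma)\zeta(1+v_1)\big)$, with $L=\log(k\sqrt D/16\pi^2)$, and this feeds the fourth-order pole at $v_1=0$.

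For the bookkeeping you left open: the $v^2$-coefficient $\psi_1/2-3$ of $\log\big(\Gamma(1/2+v)G(v)/\Gamma(1/2)\big)$ enters the $v^3$-coefficient of the residue twice, with factors $-(L-\gamma)$ and $+(L-\gamma)$. So $\psi_1$ and $G''(0)$ cancel identically, $\psi_2$ survives only through the $v^3$-term of $\log\Gamma(1/2+v)$, and the residue is exactly $Q_3(L)$.
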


\begin{lem}\label{lem: zero-freq, asymp}
	Define 
	\begin{equation}\label{3eq: Zk}
		\EuScript{Z}_{\vkappa} =  \pi i^{\vkappa+ 1} \sum_{c\, \in\BZ\smallsetminus\{0\}}\frac{\EuScript{Z}_{\vkappa}(c)}{c^2 } .
	\end{equation}
We have 
\begin{equation}
	 \EuScript{Z}_{\vkappa} = A_D + O_{ D} \bigg(\frac {\log^3 k } { {k}} \bigg), 
\end{equation}
for 
\begin{equation}\label{3eq: AD}
	A_D =   \frac { 1 } {2 \pi i } \int_{(1)} \xi (2v) \xi (1+2v) G(v )^2  D^{v }  \frac {\nd v} {v^2}.  
\end{equation}
\end{lem}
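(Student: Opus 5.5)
The plan is to evaluate $\EuScript{Z}_{\vkappa}(c)$ in closed form for rational $c \in \BZ\smallsetminus\{0\}$ by Mellin inversion. Then sum over $c$ to get a Dirichlet series, and finally pass to the limit $\vkappa \ra \infty$ with Stirling's formula.

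\emph{Step 1: the arithmetic factor.} For $c \in \BZ$ we have $c' = c$ and $|N(c)| = c^2$. The $a_1$- and $a_2$-sums modulo $c^2$ in \eqref{3eq: T(m1,m2;c)} at $(n_1,n_2)=(0,0)$ are complete character sums. They detect the congruences
\[
\Tr(\valpha/\sqrt D) \equiv 0, \qquad \Tr(\widebar{\valpha}/\sqrt D) \equiv 0 \quad (\mathrm{mod}\ c).
\]
Hence $T(0,0;c)$ equals $c^{-2}$ times the number of such reduced residues $\valpha$ modulo $c$ (up to normalisation). I would show this count is multiplicative in $c$ and compute it prime by prime, separating $p$ split, inert, and ramified ($p = D$) in $\bfF$. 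Since $\valpha \mapsto \valpha/\sqrt D$ turns the first condition into $\valpha \in \sqrt D\cdot \BZ + c\SO$, the count should be elementary. The goal is to identify $\sum_{c \geqslant 1} T(0,0;c)\, c^{-1-2w}$ with an explicit ratio of $\zeta$-values, possibly with an Euler factor at $D$. This ratio should be what produces the $\zeta(2v)$ factor and the power $D^{v}$ in $A_D$.

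\emph{Step 2: the integral.} In \eqref{3eq: Ik(m1,m2;c)} with $n_1=n_2=0$, substitute $x_1 = u/x_2$. This gives
\[
I_{\vkappa}(0,0;c) = 2\pi i^{\vkappa+1} \int_0^\infty W_{\vkappa}(u)\, J_{\vkappa}\bigg(\frac{4\pi\sqrt u}{|c|}\bigg)^{2} \frac{\nd u}{\sqrt u},
\]
where $W_{\vkappa}(u) = \int_0^\infty V_{\vkappa}(x/\sqrt D)\, V_{\vkappa}\big(u/(x\sqrt D)\big)\, \nd x/x$ is a Mellin convolution. By \eqref{eq: V(y)}, the Mellin transform of $W_{\vkappa}$ is
\[
\zeta(1+2v)^2 \bigg(\frac{\gamma_{\vkappa}(1/2+v)}{\gamma_{\vkappa}(1/2)}\bigg)^{2} G(v)^2\, D^{v}\, v^{-2}.
\]
Next, insert the Mellin--Barnes formula \eqref{eq: MB Bessel square} for $J_{\vkappa}^2$ and carry out the $u$-integral. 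This pins $s$ to $v - 1/2$ (up to a shift), leaving a single contour integral in $v$. Its integrand contains
\[
\frac{\Gamma(2v)\,\Gamma(\vkappa+1/2-v)}{\Gamma(1/2+v)^2\,\Gamma(\vkappa+1/2+v)} \cdot \frac{\gamma_{\vkappa}(1/2+v)^2}{\gamma_{\vkappa}(1/2)^2} \cdot |c|^{1+2v},
\]
times the other factors. The $i^{\vkappa+1}$ combines with $\pi i^{\vkappa+1}$ in \eqref{3eq: Zk} to give a real sign, which needs checking.

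\emph{Step 3: summation over $c$ and asymptotics.} Summing over $c$ with weight $c^{-2}$ and using Step 1 turns the $|c|$-dependence into the $\zeta$-ratio (absolute convergence on $\mathrm{Re}(v)=1$). By Stirling, every $\vkappa$-dependent Gamma ratio equals its $\vkappa$-free limit times $(1+O(|v|^2/\vkappa))$. The powers $\vkappa^{\pm 2v}$ cancel, because the Bessel kernel supports $\sqrt u \asymp \vkappa|c|$ while $V_{\vkappa}$ has scale $\vkappa$. What remains should be, after the duplication formula and $\Gamma(v)\pi^{-v}$-bookkeeping, exactly $\xi(2v)\xi(1+2v)G(v)^2D^v/v^2$, i.e.\ $A_D$. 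For the error term I would move the contour for the correction terms to $\mathrm{Re}(v) = 1/\log k$. The Gaussian decay of $G(v)^2$ controls the vertical integrals, and the order-$\leqslant 4$ pole at $v=0$ gives at most $\log^3 k / k$. The $(1-4v^2)$ factor in $G$ kills the poles at $v = \pm 1/2$.

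The main obstacle I expect is Step 1 together with the matching of constants. The local count of $T(0,0;c)$, especially at $p = D$ and $p=2$, must produce precisely $\xi(2v)$ and $D^{v}$, with the Gamma factors from the Bessel Mellin transform supplying the archimedean part of both $\xi$'s. I would first compute the Euler factor at a split prime and check it against $\zeta(2v)$ before handling the ramified prime.
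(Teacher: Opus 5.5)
Your architecture is the paper's. For rational $c$ you insert \eqref{eq: MB Bessel square} for $J_{\vkappa}^2$, evaluate the $x_1,x_2$-integrals by Mellin inversion of \eqref{eq: V(y)}, sum the resulting Dirichlet series in $c$, and finish with Stirling and duplication. Your Mellin convolution $W_{\vkappa}$ is the same computation as the paper's separate $x_1$- and $x_2$-integrals.

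The proposal goes wrong in Step~1, which you leave uncomputed and whose outcome you mispredict.
\begin{itemize}
\item For $c\in\BZ$, each $a_i$-sum modulo $c^2$ equals $c^2$ times the indicator of its congruence. This exactly cancels $N(c)^{-2}=c^{-4}$, so $T(0,0;c)$ \emph{is} the count, not $c^{-2}$ times it.
\item Writing $\valpha=a+b(1+\sqrt D)/2$ gives $\Tr(\valpha/\sqrt D)=b$. So the first congruence means $\valpha\in\BZ+c\SO$, not $\sqrt D\,\BZ+c\SO$.
\item Then $\widebar{\valpha}$ is also congruent to a rational integer modulo $c\SO$, so the second congruence holds automatically. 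Hence $T(0,0;c)=\varphi(|c|)$, which is the rational case of Lemma~\ref{lem: T(0,0;c)}.
\end{itemize}
No prime-by-prime analysis is needed. The Dirichlet series $\sum_{c\geqslant 1}\varphi(c)c^{-1-2v}=\zeta(2v)/\zeta(1+2v)$ has no Euler factor at $D$ and no power of $D$.

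This is not cosmetic, because your matching of constants relies on the arithmetic factor producing $D^{v}$. In fact $D^{v}$ comes entirely from the scaling $V_{\vkappa}(x_i/\sqrt D)$, that is, from the $D^{v}$ you already placed in the Mellin transform of $W_{\vkappa}$ in Step~2. If Step~1 also contributed $D^{v}$, as your last paragraph anticipates, you would end up with $D^{2v}$.

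With $T(0,0;c)=\varphi(|c|)$ the constants close as follows.
\begin{itemize}
\item The factor $1/\zeta(1+2v)$ cancels one of the two copies of $\zeta(1+2v)$ coming from $W_{\vkappa}$.
\item The $\vkappa$-dependence collapses to $\Gamma(\vkappa+1/2-v)\Gamma(\vkappa+1/2+v)/\Gamma(\vkappa+1/2)^2=1+O(\log^2 k/k)$ on $\mathrm{Re}(v)=1$, $|\mathrm{Im}(v)|\leqslant\log k$.
\item By duplication, $\Gamma(2v)$ together with the powers of $2\pi$ becomes $\pi^{-2v-1/2}\Gamma(v)\Gamma(v+1/2)$. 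This yields exactly $\xi(2v)\xi(1+2v)G(v)^2D^{v}/v^2$.
\item The overall sign is $i^{2(\vkappa+1)}=(-1)^k=1$.
\end{itemize}

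Two smaller corrections.
\begin{itemize}
\item After Mellin--Barnes the $c$-dependence is $|c|^{1-2v}$, not $|c|^{1+2v}$. With the latter, the $c$-sum would diverge on $\mathrm{Re}(v)=1$.
\item No contour shift is needed for the error term. The Stirling error can be bounded directly on $\mathrm{Re}(v)=1$, as the paper does. Shifting to $\mathrm{Re}(v)=1/\log k$ is harmless, but it crosses no pole at $v=0$, so no residue there is involved.
\end{itemize}
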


\begin{lem}\label{lem: zero-freq, bound}
	 We have  
	 \begin{equation}
	 	 \EuScript{Z}_{\vkappa}(c) = O_{\bfF, \vepsilon} \bigg(\frac {k^{\vepsilon} } {\sqrt{k}} \bigg) , 
	 \end{equation}
 for any $ c \in \SO \smallsetminus \BZ $, with $ |c|, |c'| \Lt k^{\vepsilon}$.  
\end{lem}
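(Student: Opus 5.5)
The plan is to bound the two factors of $\EuScript{Z}_{\vkappa}(c) = T(0,0;c) I_{\vkappa}(0,0;c)$ separately. For the arithmetic factor we use the trivial bound \eqref{3eq: bound for T (n;c)}, $T(0,0;c) \Lt |N(c)| \Lt k^{\vepsilon}$. The real content is therefore the claim $I_{\vkappa}(0,0;c) \Lt k^{-1/2+\vepsilon}$ whenever $c \notin \BZ$; one exceptional configuration will instead be killed arithmetically.

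\emph{Reduction to a one-variable integral.} In \eqref{3eq: Ik(m1,m2;c)} with $n_1=n_2=0$, substitute $x_1 = ut$, $x_2 = u/t$. The Bessel product depends only on $u=\sqrt{x_1x_2}$, so
\begin{equation*}
I_{\vkappa}(0,0;c) = 4\pi i^{\vkappa+1} \int_0^\infty J_{\vkappa}\Big(\frac{4\pi u}{|c|}\Big) J_{\vkappa}\Big(\frac{4\pi u}{|c'|}\Big) W_{\vkappa}(u)\, \nd u, \qquad W_{\vkappa}(u) = \int_0^\infty V_{\vkappa}\Big(\frac{ut}{\sqrt D}\Big)V_{\vkappa}\Big(\frac{u}{t\sqrt D}\Big)\frac{\nd t}{t}.
\end{equation*}
By \eqref{2eq: bound for Vk(y)}, $W_{\vkappa}$ is smooth with $u^jW_{\vkappa}^{(j)}(u) \Lt k^{\vepsilon}$ and is negligible for $u \Gt k^{1+\vepsilon}$. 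By \eqref{2eq: prelim bounds}, we may also restrict to $u \Gt k^{1-\vepsilon}$. We then write $a = 4\pi/|c|$ and $b = 4\pi/|c'|$, both of which are $\Gt k^{-\vepsilon}$.

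\emph{Generic case $|c| \neq |c'|$.} Since $c, c' \in \SO$ and $|c|,|c'| \Lt k^{\vepsilon}$, the ratio $a/b$ differs from $1$ by $\Gt k^{-\vepsilon}$. Dyadically decompose in $u$ and treat three regimes.
\begin{itemize}
\item[(i)] Where $au/\vkappa$ or $bu/\vkappa$ lies in the transition range $|x-1| \leqslant \vkappa^{-2/3}$ or below it, use Lemma \ref{lem: Olver} (1), (2). A $u$-interval of length $\vkappa^{1/3}$ where one factor is $O(\vkappa^{-1/3})$ and the other is $O(\vkappa^{-1/2+\vepsilon})$ contributes $O(\vkappa^{-1/2+\vepsilon})$; below the transition the exponential decay makes everything small. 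Because $a \neq b$ at scale $k^{-\vepsilon}$, the two transition points are separated.
\item[(ii)] In the oscillatory range, insert \eqref{Beq: Olver} for both factors. The main terms produce phases $\vkappa(\pm\gamma(au/\vkappa) \pm \gamma(bu/\vkappa))$ with $u$-derivative $\pm\sqrt{a^2 - \vkappa^2/u^2} \pm \sqrt{b^2-\vkappa^2/u^2}$. In absolute value this is $\Gt k^{-\vepsilon}$ for every sign combination, since $a \neq b$ and both square roots are positive. The amplitudes are $\vkappa^{-1}(x^2-1)^{-1/4}(y^2-1)^{-1/4}$, whose derivatives lose only factors of $1/(x-1)\vkappa$ relative to the amplitude. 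Repeated integration by parts, or a single one combined with the second-derivative test near the transition, therefore makes these terms $O(k^{-1/2+\vepsilon})$, or even smaller.
\item[(iii)] The cross terms between the error $O(1/\vkappa x)$ in \eqref{Beq: Olver} for one factor and the other Bessel factor are bounded trivially by $k \cdot k^{-1} \cdot k^{-1/2+\vepsilon}$, which is exactly the claimed $k^{-1/2+\vepsilon}$. I expect this to be the bottleneck.
\end{itemize}

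\emph{Degenerate case $|c| = |c'|$ with $c \notin \BZ$.} This forces $c' = -c$, so $c = m\sqrt D$ with $m \in \BZ$, possibly up to the units already absorbed. Here the integral is $\asymp 1$ by \eqref{2eq: integral of J(x)2}, so the only route is to show $T(0,0;c) = 0$. Open the Kloosterman sum in \eqref{3eq: T(m1,m2;c)}: the $a_1$- and $a_2$-sums are complete additive character sums over $\BZ/N(c)\BZ$. They force $\Tr(\valpha/(c\sqrt D))$ and $\Tr(\widebar{\valpha}/(c\sqrt D))$ to lie in $\BZ$, which amounts to $mD \mid \Tr(\valpha)$ for some $\valpha$ coprime to $m\sqrt D$. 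Reducing modulo the prime $\sqrt D$, we have $\valpha \equiv a \bmod \sqrt D$ with $D \nmid a$, hence $\Tr(\valpha) \equiv 2a \not\equiv 0 \pmod{\sqrt D}$. Since $D$ is odd, this is a contradiction. Hence $T(0,0;c)=0$ and $\EuScript{Z}_{\vkappa}(c)=0$.

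I expect the main obstacle to be making the integration by parts in (ii) uniform near the two turning points, where the amplitudes blow up like $(x-1)^{-1/4}$. As a first attempt I will cut at $|x-1| \asymp \vkappa^{-2/3+\vepsilon}$ and bound the piece near the turning point trivially using part (1), rather than integrating by parts through the singularity.
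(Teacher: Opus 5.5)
Your argument is correct. Its analytic core matches the paper's: Olver's asymptotics, a trivial bound on the turning-point window (length $\vkappa^{1/3+\vepsilon}$, one Bessel factor $O(\vkappa^{-1/3})$, the other $O(\vkappa^{-1/2+\vepsilon})$), a trivial bound for the Olver error terms, and an oscillatory-integral estimate for the four main terms. In both proofs the transition window and the error terms already cost $\vkappa^{-1/2+\vepsilon}$.

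The routes differ in the arithmetic input and in the oscillatory estimate.
\begin{itemize}
\item \emph{The paper's route.} It first uses Lemma~\ref{lem: T(0,0;c)} to reduce to $c=d\epsilon$ with $d\in\BZ_+$ and $\epsilon\neq 1$ a unit. The ratio of the two Bessel arguments is then the fixed number $\epsilon'^{2}\geqslant c_{\bfF}>1$. It then applies the second derivative test, since $f''$ is dominated by the term attached to the smaller argument.
\item \emph{Your route.} You bound $T(0,0;c)$ trivially for every $c\notin\BZ$ with $|c|\neq|c'|$. You use only that $|c^2-c'^2|=|c-c'|\,|c+c'|\geqslant\sqrt D$, which gives $\bigl||c|-|c'|\bigr|\Gt k^{-\vepsilon}$. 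Arithmetic is needed only to show $T(0,0;m\sqrt D)=0$, and that is exactly the paper's step excluding $d\in\sqrt D\,\BZ$.
\item \emph{Oscillatory estimate.} Your lower bound $|f'|\Gt k^{-O(\vepsilon)}$ holds for all four sign patterns. Combined with one integration by parts, it even gives $O(\vkappa^{-5/6+\vepsilon})$ for the main terms.
\end{itemize}
So your version uses less of the structure of $T(0,0;c)$. The price is $k^{O(\vepsilon)}$ losses, because $|c'|/|c|$ may be as close to $1$ as $1+k^{-O(\vepsilon)}$; the lemma's $k^{\vepsilon}$ absorbs this. The paper's version has $\bfF$-dependent constants and a cleaner phase analysis.

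Two details to tidy up:
\begin{itemize}
\item The window $\vkappa^{-2/3}<|x-1|\leqslant\vkappa^{-2/3+\vepsilon}$ is not covered by part (1) of Lemma~\ref{lem: Olver}. Use parts (2) and (3$'$) there; they still give $O(\vkappa^{-1/3})$.
\item In (iii), take the case where the error term belongs to the larger-argument factor. The companion factor $J_{\vkappa}(au)$ is only $O(\vkappa^{-1/3})$ just past its own turning point, so a sup bound gives only $\vkappa^{-1/3}$. You need to integrate $\vkappa^{-1/2}(x-1)^{-1/4}$ over $x$, for instance dyadically in $x-1$; this does yield $O(\vkappa^{-1/2+\vepsilon})$.
\end{itemize}
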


\begin{lem}\label{lem: dual}
Let  $ k^{1-\vepsilon} \Lt	X_1, X_2 \Lt k^{1+\vepsilon} $. 	We have 
	\begin{equation}
		\widetilde{\EuScript{O}}_{X_1, X_2}  (c)  = O_{\bfF, \vepsilon} \bigg(\frac {k^{\vepsilon} } {\sqrt{k}} \bigg) , 
	\end{equation}
	 for any $ c \in \SO \smallsetminus \{0 \} $, with $ |c|, |c'| \Lt k^{\vepsilon}$. 
\end{lem}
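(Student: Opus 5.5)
The plan is to bound $\widetilde{\EuScript{O}}_{X_1, X_2}(c)$ through the majorant \eqref{3eq: dual O(c)}. Then it suffices to show, for $r \geqslant \vkappa/8\pi$, that $\EuScript{Q}^{\flat}_{X_1, X_2}(r;c)$ is $O(k^{\vepsilon}/\sqrt{k})$ times something integrable against $|J_{\vkappa}(4\pi r)|\,\nd r/r$. The integral $I_{X_1, X_2}(n_1, n_2; r; q)$ in \eqref{3eq: IX (n;r;c)}, with $q = |N(c)| \Lt k^{\vepsilon}$, has a bilinear phase. Hence its stationary point is explicit: $x_2 = n_1 r$ and $x_1 = n_2 r$.

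\emph{Step 1: truncating the frequencies.} Partial integration in $x_1$ gives a factor $q X_2^{-1}/|x_2/r - n_1|$ per step, and similarly in $x_2$. Since $r \geqslant \vkappa/8\pi$ and $X_i \asymp k^{1\pm\vepsilon}$, the ratio $x_2/r$ is $O(k^{\vepsilon})$. So $I$ is negligible unless both $|n_1 - x_2/r| \Lt k^{\vepsilon}/X_1$ and $|n_2 - x_1/r| \Lt k^{\vepsilon}/X_2$ hold for some $x$ in the support. In particular $n_1 \asymp X_2/r$ and $n_2 \asymp X_1/r$, and both $n_i$ are nonzero. For $r \Gt k^{1+\vepsilon}$ this forces $n_1 = n_2 = 0$, which is excluded, so the $r$-integral is effectively restricted to $\vkappa/8\pi \leqslant r \Lt k^{1+\vepsilon}$. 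The number of surviving pairs $(n_1, n_2)$ is $O(k^{\vepsilon})$.

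\emph{Step 2: bounding each term by stationary phase.} For each surviving pair I will change variables $x_1 = n_2 r + u_1$, $x_2 = n_1 r + u_2$. The phase becomes
\begin{equation*}
\frac{u_1 u_2/r - n_1 n_2 r}{q}.
\end{equation*}
Its Hessian has size $1/(qr)$, so the two-dimensional stationary phase bound gives $|I| \Lt \frac{1}{\sqrt{X_1X_2}} \cdot qr \Lt k^{\vepsilon}$. This is the trivial-looking bound, and it only yields $\EuScript{Q}^{\flat} \Lt k^{\vepsilon}$. Combined with $\int_{\vkappa/8\pi}^{k^{1+\vepsilon}} |J_{\vkappa}(4\pi r)|\,\nd r/r$, the final gain must therefore come from the $r$-integral.

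\emph{Step 3: estimating the $r$-integral.} By Lemma~\ref{lem: Olver}, $|J_{\vkappa}(4\pi r)| \Lt \vkappa^{-1/2}(x^2-1)^{-1/4}$ with $x = 4\pi r/\vkappa$. There is a transition region of width $\vkappa^{1/3}$ where the bound is $\vkappa^{-1/3}$, and exponential decay below $x=1$. Integrating,
\begin{equation*}
\int_{\vkappa/8\pi}^{k^{1+\vepsilon}} |J_{\vkappa}(4\pi r)| \frac{\nd r}{r} \Lt \vkappa^{-1/2} k^{\vepsilon},
\end{equation*}
since the measure $\nd r/r$ over $x \asymp 1$ to $k^{\vepsilon}$ is $O(\log k)$. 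Together with $\EuScript{Q}^{\flat} \Lt k^{\vepsilon}$, this gives exactly $O(k^{\vepsilon}/\sqrt{k})$.

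The main obstacle is making Step 2 uniform. We need $|I| \Lt k^{\vepsilon}$ uniformly in $r$ and $(n_1, n_2)$, including when the stationary point lies near the boundary of the support of $\varww$, and with only $O(k^{\vepsilon})$ pairs surviving. I would handle this by the crude two-variable van der Corput / second-derivative test rather than full asymptotics. This requires only the derivative bounds on $\varww_{X_1,X_2}$ and no precise stationary point location, so boundary effects are harmless. I would also check that the weighted count of pairs in Step 1 does not lose more than $k^{\vepsilon}$. This is the case because $X_i/r \Lt k^{\vepsilon}$ and the admissible windows for $n_i$ have length $O(k^{\vepsilon})$.
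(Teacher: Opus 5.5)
Your proposal is correct, and its skeleton matches the paper's proof. Both majorize by \eqref{3eq: dual O(c)}, localize both frequencies by integration by parts so that $0<n_1,n_2\Lt k^{\vepsilon}$ and $\vkappa\Lt r\Lt k^{1+\vepsilon}$, bound each surviving integral by $k^{\vepsilon}$, and extract the saving $\vkappa^{-1/2}$ from the $r$-integral. The differences are in three sub-steps.

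\emph{Exactly one zero frequency.} The paper substitutes $v=x_1x_2$. This makes the $x_1$-derivative of the phase the constant $-X_1n_1/q$ and gives negligibility uniformly in $r$. Your joint-localization argument also works uniformly in $r$, without the change of variables: if $n_1=0$, then $x_2/r\Lt qk^{\vepsilon}/X_1$, which forces $r$ so large that $x_1/r<1/2$, and hence $n_2=0$ as well.

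\emph{Per-term bound.} What you flag as the main obstacle is easier than you expect. Integrating by parts in $u_1$ confines $u_2$ to $|u_2|\Lt qrk^{\vepsilon}/X_1$, and the trivial bound then gives $|I|\Lt qrk^{\vepsilon}/\sqrt{X_1X_2}\Lt k^{\vepsilon}$. This is exactly the paper's argument: it localizes $x_2$ to an interval of length $O(rqk^{\vepsilon}/X_1X_2)$ after rescaling. A two-dimensional second-derivative test is less convenient here. Standard versions want lower bounds on the pure second derivatives, and these vanish for the phase $u_1u_2/(qr)$, so you would first have to rotate coordinates.

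\emph{The $r$-integral.} You bound $\int|J_{\vkappa}(4\pi r)|\,\nd r/r$ pointwise via Lemma~\ref{lem: Olver}, which does give $O(\vkappa^{-1/2})$; the error term $O(1/\vkappa x)$ there is harmless. The paper instead uses Cauchy--Schwarz with the exact identity \eqref{2eq: integral of J(x)2}. That route needs no uniform Bessel asymptotics at all and costs only a factor $\sqrt{\log k}$.

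One small slip: in Step 1 the integration-by-parts factor should be $qX_1^{-1}/|x_2/r-n_1|$, not $qX_2^{-1}/|x_2/r-n_1|$. Your subsequent conditions already use the correct $X_1$.
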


In view of \eqref{3eq: M=D+O} and \eqref{3eq: O(c)=Z(c)+O(c)}, the asymptotic formula in \eqref{1eq: main asymp} in Theorem \ref{thm: 2nd moment} is a direct consequence of the lemmas above, with 
\begin{equation}\label{3eq: P = Q}
	P_3 (X) = 4 Q_3 (X - 2 \log (4\pi)), 
\end{equation}
\begin{equation}\label{3eq: CD}
	C_D = 4 \sqrt{D} C + 4  A_D =   \frac {2} {\pi i }   \int_{(1)}  \xi ( 2v ) \xi (1+2v)  G (v)^2 (D^v - \sqrt{D} ) \frac {\nd v} {v^2}. 
\end{equation}

\begin{remark}\label{rem: constant CD}
	  For the second moment of $ L (1/2, \mathrm{Sym}^2 h) $, for $ h $ in a Hecke orthonormal basis of $   S_{k}  (\mathrm{SL}_2 (\BZ))$, averaged over $ k \asymp K $, Khan obtained integrals in (3.21) and (3.28) in \cite{Khan-Sym2-Non-vanishing} similar to those in {\rm\eqref{3eq: CD, diag}} and {\rm\eqref{3eq: AD}}, but in that case these integrals cancel each other.  However, the cancellation does not occur in our  setting as $D > 1$!   
\end{remark}

\section{Asymptotic for the Diagonal Sums: Proof of Lemma \ref{lem: diag}}

By \eqref{eq: V(y)} and \eqref{3eq: Dk}, we have the double Mellin integral representation
\begin{equation}\label{eq: diagonal exact double Mellin}
	\begin{split}
		\EuScript{D}_{\vkappa}
		= \frac{1}{  (2\pi i)^2} \!   \int \!\!  \int_{(3)} \! %\int_{(3)} \!
		\zeta(1+v_1+v_2)  \zeta (v_1, v_2)
	\frac{\gamma_{\vkappa} ( v_1 , v_2 )}{\gamma_{\vkappa} (1/2 )^2} 
		G(v_1)G(v_2) D^{\textstyle  \frac{v_1+v_2 }{2} } \frac{\nd v_1\nd v_2}{v_1 v_2} ,
	\end{split}
\end{equation}
where 
\begin{align}\label{4eq: gamma k (u1, u2)}
\zeta (v_1, v_2) =	\zeta(1+2v_1)\zeta(1+2v_2)  , \qquad 
	 \gamma_{\vkappa} ( v_1 , v_2 ) = \gamma_{\vkappa} ( 1/2 + v_1 ) \gamma_{\vkappa} ( 1/2 + v_2 ). 
\end{align}
%where $\zeta(1+v_1+v_2)$ is from the $n$-sum in \eqref{3eq: Dk}. 
Let us shift the integral contours to $\mathrm{Re}(v_1) = \mathrm{Re}(v_2) = \vepsilon$. Note that, at the cost of a negligible error,  we may restrict the double integral to $ |\mathrm{Im} (v_1) |, \, |\mathrm{Im} (v_2) | \leqslant \log k $ due to the vertical exponential decay of $ G(v) $ (see \eqref{2eq: G(s)}).   It follows from the Stirling formula \eqref{eqapp: Stirling, quot} that for $\mathrm{Re} (v) = \vepsilon$ and  $|\mathrm{Im} (v)|  \leqslant \log k$ we have 
\begin{equation*}
	\frac{\Gamma(\vkappa+1/2+v)}{\Gamma(\vkappa+1/2)}
	= k^v \bigg(1+O_{\vepsilon} \bigg(\frac{\log^2 k}{k}\bigg)\bigg). 
\end{equation*}
 Thus, in view of \eqref{2eq: gamma k(s)}, \eqref{2eq: G(s)},  \eqref{eq: diagonal exact double Mellin}, and \eqref{4eq: gamma k (u1, u2)}, 
\begin{equation}
\begin{split}
		\EuScript{D}_{\vkappa} \!
	= \! \frac{1}{  (2\pi i)^2} \!    \int \!\!   \int_{(\vepsilon)} \! \! %\int_{(3)} \!
	\zeta(1+v_1+v_2)  \xi (v_1, v_2) 
	G(v_1)G(v_2) K^{ {v_1+v_2 } }   \frac{\nd v_1\nd v_2}{v_1 v_2}  
	 + O_{\vepsilon}  \bigg(  \frac {k^{\vepsilon}} {k}  \bigg) & ,
\end{split}
\end{equation}
where 
\begin{align}
	K = \frac {k \sqrt{D}  }  {4\pi} , 
\end{align}  
\begin{align}
	\xi (v_1, v_2) = \xi (1+2v_1) \xi (1+2v_2) . 
\end{align}
Now we shift the integral contours to  $\mathrm{Re}(v_1) = \mathrm{Re}(v_2) = -1/2 + \vepsilon$. Clearly,  the resulting double integral is again $ O_{\vepsilon} (k^{\vepsilon} / k) $, and it is left to calculate the residual contribution.
To this end, let us first move the $v_1$-contour and cross the poles at $v_1 = 0$ and $v_1 = -v_2$.  

Since 
\begin{equation*}
	\frac {\xi (1+2v)} {v} = \frac 1 {2 v^2} +     \frac {{\gamma}   - \log (4 {\pi}) } {2 v} + O (1),  
\end{equation*}
as $v \ra 0$ (due to $\psi (1/2) = - \gamma - 2\log 2$ (see \cite[\S 1.2]{MO-Formulas})),  the double pole at $v_1 = 0$ yields the residual contribution:
\begin{align}\label{4eq: residual integral}
	\frac 1 {4 \pi i}  \int_{(\vepsilon)} \big( (\log K + \gamma - \log (4\pi) ) \zeta (1+v) + \zeta' (1+v)  \big) \xi (1+2 v) G(v) K^{v}  \frac {\nd v} {v} . 
\end{align}
Next we move the contour of this integral to $ \mathrm{Re} (v) = -1/2 +\vepsilon $, obtaining the error term $ O_{\vepsilon} (k^{\vepsilon} /\sqrt{k}) $ and the main residual term from the pole  of {fourth} order at $ v = 0$.  It is evident from \eqref{4eq: residual integral} that the residue is indeed a cubic polynomial in $\log  {K}$, but it requires some careful  (albeit routine) calculations to prove the explicit expression as in \eqref{3eq: Dk = Q3+CD} and \eqref{3eq: Q3(X)}.

Finally, the contribution from the simple pole at $v_1 = - v_2  $ is equal to 
\begin{equation}
	C = - \frac 1 {2\pi i} \int_{(\vepsilon)}  \xi ( 2v ) \xi (1+2v)  G (v)^2 \frac {\nd v} {v^2} ,
\end{equation}
and we arrive at \eqref{3eq: CD, diag} by shifting the integral contour to $\mathrm{Re} (v) = 1$. Note that $G (1/2) = 0$ by its definition in \eqref{2eq: G(s)}.

\section{Analysis for the Zero Frequencies: Proof of Lemmas \ref{lem: zero-freq, asymp} and \ref{lem: zero-freq, bound}} 

In this section, we consider the zero frequency  \begin{align*}
	 \EuScript{Z}_{\vkappa}(c) = T (0, 0; c) I_{\vkappa} (0, 0; c)  ,
\end{align*}
as defined by \eqref{3eq: Z(c)}, \eqref{3eq: T(m1,m2;c)}, and \eqref{3eq: Ik(m1,m2;c)}. 

\subsection{Evaluation of $\boldsymbol{T (0, 0; c)}$} 

Recall from \eqref{3eq: T(m1,m2;c)} that 
\begin{align}\label{5eq: T(0,0;c)}
	T (0,0; c)  = \frac{1}{N(c)^2}\mathop{\sum\sum}_{a_1,a_2  (\mathrm{mod}\, N(c) \BZ)}
	S_{\bfF}(a_1,a_2;c)  . 
\end{align}

\begin{lem}\label{lem: T(0,0;c)}
	 We have 
	 \begin{equation}\label{eq: T00 formula}
	 	T(0,0;c)=
	 	\begin{cases}
	 	\!	\varphi(d), & \text{if } c=d\epsilon,\, d\in\BZ_+,\, \epsilon\in \mathrm{U}, 
	 		\text{and } \sqrt D d\, |  (\epsilon^2-\epsilon^{\prime \, 2});  \\
	 		0, & \text{if otherwise.}
	 	\end{cases}
	 \end{equation}
\end{lem}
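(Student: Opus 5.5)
Write $q=|N(c)|$. I will expand the Kloosterman sum in \eqref{5eq: T(0,0;c)} and swap the order of summation:
\begin{equation*}
T(0,0;c)=\frac{1}{q^2}\sumx_{\valpha\,(\mathrm{mod}\, c)}\;\sum_{a_1 (\mathrm{mod}\, q)} e_{\bfF}\bigg[\frac{a_1\valpha}{c}\bigg]\sum_{a_2 (\mathrm{mod}\, q)} e_{\bfF}\bigg[\frac{a_2\widebar{\valpha}}{c}\bigg].
\end{equation*}
Because $c \mid q$ in $\SO$, each summand depends only on $a_i$ modulo $q\BZ$, so the inner sums make sense.

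For $\beta\in\SO$, the map $a\mapsto e_{\bfF}[a\beta/c]=e(a\,\Tr(\beta/(c\sqrt D)))$ is an additive character of $\BZ/q\BZ$. Hence the inner sum over $a$ equals $q$ when $\Tr(\beta/(c\sqrt D))\in\BZ$ and vanishes otherwise. This gives
\begin{equation*}
T(0,0;c)=\#\big\{\valpha\in(\SO/c)^\times:\ \Tr(\valpha/(c\sqrt D))\in\BZ,\ \Tr(\widebar\valpha/(c\sqrt D))\in\BZ\big\}.
\end{equation*}
The condition $\Tr(\beta x)\in\BZ$ for $x=1/(c\sqrt D)$ says $\beta\in L_c:=\{\beta\in\SO:\Tr(\beta/(c\sqrt D))\in\BZ\}$. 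This is a sublattice of $\SO$ of index dividing $q$ that contains $c\SO$. The problem therefore reduces to counting units $\valpha$ modulo $c$ such that both $\valpha$ and $\valpha^{-1}$ lie in $L_c/c\SO$.

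For the structure of $L_c$: since $(\sqrt D)$ is the different, $L_c$ is exactly the set of $\beta$ with $\beta/(c\sqrt D)\in \BZ+\mathfrak{d}^{-1}$-type constraints. Concretely, write $\beta/c = u/\,$ and note that $\Tr(y/\sqrt D)\in\BZ$ for all $y\in\SO$, so $L_c/c\SO$ is the kernel of a nontrivial (unless $c\mid$ something) homomorphism $\SO/c\SO\to\mathbf{Q}/\BZ$. Its image is cyclic of order $m$, with $m$ equal to the content-type invariant of $c$. Factor $c=d\,c_0$ with $d\in\BZ_+$ maximal, so $c_0$ is primitive. I expect $L_c/c\SO\cong$ a subgroup of index $|N(c_0)|$, namely $\{\beta\equiv\, \BZ$-multiple-of-something$\}$. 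In the simplest case $c=d\in\BZ$, $L_c/c\SO$ is the image of $\BZ+ d\SO$, i.e. $\beta\equiv a\ (\mathrm{mod}\ d)$ with $a\in\BZ$. Then both conditions reduce to $\valpha\equiv a$ with $a\in(\BZ/d)^\times$, and the inverse is automatically rational. This gives $T=\varphi(d)$.

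For general $c=d\epsilon c_0$: if $c_0$ is not a unit, I would show that $L_c/c\SO$ is contained in a set $\{\beta \equiv a\,\theta\}$ for a fixed $\theta$. The simultaneous conditions on $\valpha$ and $\valpha^{-1}$ then force $\theta^2\in$ a coset incompatible with $c_0$ being a nonunit, for example by working locally at a prime dividing $c_0$ but not its conjugate (or at a ramified or inert prime), which makes the count $0$. If $c_0=\epsilon$ is a unit, then $L_c=\{\beta:\Tr(\beta\epsilon^{-1}/(d\sqrt D))\in\BZ\}$, which is $\epsilon(\BZ+d\SO)$ modulo $d\SO$ twisted. The conditions become $\valpha\equiv a\epsilon$ and $\valpha^{-1}\equiv b\epsilon$ modulo $d$ with $a,b\in\BZ$. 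Multiplying gives $ab\epsilon^2\equiv1\ (\mathrm{mod}\ d)$, so $\epsilon^2\equiv\epsilon'^2\ (\mathrm{mod}\ d)$ after conjugating, and one gets the exact divisibility condition $\sqrt D d\mid \epsilon^2-\epsilon'^2$ by tracking the factor $\sqrt D$ from the different. When the condition holds, the count is again $\#\{a\in(\BZ/d)^\times\}=\varphi(d)$.

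The main obstacle is this unit case: converting the trace condition $\Tr(\beta\epsilon^{-1}/(d\sqrt D))\in\BZ$ into the precise congruence $\sqrt D\, d\mid(\epsilon^2-\epsilon'^2)$, with the right power of $\sqrt D$. The nonunit vanishing also needs care at the ramified prime $\sqrt D$. I would handle both by writing $\beta=x+y\omega$ in the integral basis $\{1,\omega=(1+\sqrt D)/2\}$ and computing the traces explicitly.
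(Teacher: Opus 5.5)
\textbf{There is a gap: the vanishing case is not proved.} Your orthogonality step is correct and is the same as the paper's first step. Your treatment of $c=d\epsilon$ is essentially right. What is missing is the proof that $T(0,0;c)=0$ when $c$ is not of the form $d\epsilon$, which is the real content of the lemma.

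For nonunit $c_0$ you only announce two things: that $L_c/c\SO$ lies in some $\{\beta\equiv a\theta\}$ with $\theta$ unspecified, and that a coset condition on $\theta^2$ fails ``for example by working locally''. Neither is shown. Your description of $L_c$ is also off: with $c=dc_0$, $d\in\BZ_+$ maximal and $c_0$ primitive, $L_c/c\SO$ has order $d$, i.e.\ index $d|N(c_0)|$ in $\SO/c\SO$, not $|N(c_0)|$.

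\textbf{The missing step is to determine $L_c$ exactly.} This can be done as follows.
\begin{itemize}
\item One has $c_0(\BZ+d\SO)\subseteq L_c$ directly, since $\Tr(a/(d\sqrt D))=0$ for $a\in\BZ$ and $\Tr(\delta/\sqrt D)\in\BZ$ for $\delta\in\SO$.
\item $[\SO:L_c]$ is the order of the image of $\beta\mapsto\Tr(\beta/(c\sqrt D))$ in $\BQ/\BZ$. Since $\Tr(x\SO)\subseteq r\BZ$ iff $x\sqrt D\in r\SO$, this image is $\frac1m\BZ/\BZ$, with $m$ the least positive integer in $c\SO$.
\item $m=d|N(c_0)|$ because $c_0$ is primitive: its ideal contains $(\sqrt D)$ at most once and no product of two conjugate primes.
\item Hence $L_c/c\SO$ and $c_0(\BZ+d\SO)/c\SO$ both have order $d$, so $L_c=c_0(\BZ+d\SO)$.
\end{itemize}
If $c_0$ is not a unit, every element of $L_c$ is divisible by $c_0$. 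So no $\valpha$ coprime to $c$ satisfies even the first condition, and $T(0,0;c)=0$. This handles the split and ramified cases at once, and $\widebar\valpha$ plays no role.

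The paper argues differently. It writes $c=dc_{\ro}$ with $(c_{\ro},c'_{\ro})=(1)$ and reduces the first congruence modulo $c_{\ro}$ to get $c'_{\ro}\valpha\equiv0$ $(\mathrm{mod}\, c_{\ro})$, which forces $c_{\ro}$ to be a unit. It then excludes $d\in\sqrt D\BZ$ separately: there $D\mid\beta+\beta'$ and $\sqrt D\mid\beta-\beta'$ force $\sqrt D\mid 2\beta$.

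\textbf{The step you call the main obstacle is immediate.} Put $\omega=(1+\sqrt D)/2$, so $\sqrt D=2\omega-1$. Then for $n\in\BZ$ one has $\sqrt D\,n/d\in\SO$ iff $d\mid n$. Since $\epsilon^2-\epsilon'^2=v\sqrt D$, where $v\in\BZ$ is the $\omega$-coefficient of $\epsilon^2$, your congruence $\epsilon^2\equiv\epsilon'^2$ $(\mathrm{mod}\, d)$ is equivalent to $d\mid v$. That is the same as $\sqrt D d\mid(\epsilon^2-\epsilon'^2)$, and also the same as $\epsilon^2\in\BZ+d\SO$.

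The last form gives the converse you left implicit. For $(a,d)=1$, the inverse of $a\epsilon$ modulo $d$ is $\bar a\,\epsilon'^2\epsilon\in\epsilon(\BZ+d\SO)$. So all $\varphi(d)$ classes satisfy both conditions.
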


\begin{proof}
	Opening the Kloosterman sum in \eqref{5eq: T(0,0;c)}, we have 
	\begin{equation*}
	T (0,0; c)  	= \frac{1}{N(c)^2} \, \sumx_{\valpha (\mathrm{mod}\, c   \SO)}   \mathop{\sum\sum}_{a_1,a_2  (\mathrm{mod}\, N(c) \BZ)} e \bigg(   \! \Tr \bigg(\frac{a_1 \valpha+ a_2 \widebar{\valpha} }{c \sqrt{D}}\bigg)  \bigg) . 
	\end{equation*}
	By orthogonality, $T (0, 0; c)$ is equal to the number of $\valpha\in(\SO/c \,\SO)^\times$ satisfying 
\begin{equation}\label{eq: T00 congruences}
	q \,| \Tr\bigg(\frac{c'\valpha}{\sqrt D}\bigg),\quad
	q \, | \Tr\bigg(\frac{c'\widebar{\valpha}}{\sqrt D}\bigg),
	\qquad q=|N(c)|.
\end{equation}
Split   $c=dc_{\ro}$ so that $(c_{\ro},c'_{\ro})=(1)$. Then  $d$ lies in either   $\BZ$ or   $\sqrt D\BZ$ (as the prime divisors of $d$ must be inert or ramified). Now the first condition in \eqref{eq: T00 congruences} implies
\begin{equation*}
	\frac{c'_{\ro}\valpha-c_{\ro}\valpha'}{\sqrt D}\equiv 0 \,  (\mathrm{mod} \, {q/d} \cdot \BZ),
\end{equation*}
and, if we reduce the above congruence modulo $c_{\ro} \SO$, then 
\begin{equation*}
	c'_{\ro}\valpha\equiv 0 \,  (\mathrm{mod} \, c_{\ro} \SO).
\end{equation*}
However, as $(c_{\ro},c'_{\ro})=(1)$ and $(\valpha,c)=(1)$, this forces $c_{\ro}$ to be a unit. Thus $T(0,0;c)=0$ unless $c=d \epsilon$ for some unit $\epsilon\in \mathrm{U}$. 

For simplicity, let us absorb $\epsilon' $ into $\valpha$ and set $\beta = \epsilon'  \valpha$. 

Next we need to rule out the case $d \in \sqrt{D} \BZ$. To this end, the first condition in \eqref{eq: T00 congruences} yields $ D \, |  (\beta + \beta' ) $.  However, we always have $ \sqrt{D} \, | (\beta - \beta') $, hence $ \sqrt{D} \,| \, 2 \beta $, and this clearly contradicts $ (\beta, d) = (1) $. 

For $d \in \BZ$,  \eqref{eq: T00 congruences} is translated into  
\begin{equation}\label{5eq: T00, 2}
	d   \,| \Tr\bigg(\frac{ \beta }{\sqrt D}\bigg),\quad
	d \, | \Tr\bigg(\frac{\epsilon^{\prime \, 2} \widebar{\beta}}{\sqrt D}\bigg). 
\end{equation}
Recall that $D$ is prime and $ D \equiv 1 (\mathrm{mod} \, 4) $. Let us write
\begin{align*}
	\beta  = a + b \frac {1+\sqrt{D}} 2, \qquad (a, b \in \BZ/ d \BZ).   
\end{align*}
Now the first condition in \eqref{5eq: T00, 2} yields $d \,|\, b$, hence $\beta \equiv a (\mathrm{mod}\, d \SO)$, so the co-primality condition $ (\beta, d) = (1) $ amounts to $ (a, d) = 1 $ and the number of these $\beta$ is exactly $\varphi (d)$.   Moreover, for such $\beta$,  it is clear that the second condition in \eqref{5eq: T00, 2} holds if and only if $ \epsilon^2 - \epsilon^{\prime\, 2} $ is divisible by $\sqrt{D} d$.    
\end{proof}

\subsection{Proof of Lemma~\ref{lem: zero-freq, asymp}: the Rational Case}
\label{subsec: proof of zero freq asymp}

First, we treat the contribution of the zero frequencies $ \EuScript{Z}_{\vkappa}(c) $ for rational $c$.  

For $ c \in \BZ \smallsetminus \{0\} $, we have proven in Lemma \ref{lem: T(0,0;c)} that $ T (0, 0; c) = \varphi (|c|) $ and we recall from \eqref{3eq: Ik(m1,m2;c)} that 
\begin{equation}\label{5eq: Ik(0,0;c)}
	I_{\vkappa} (0   , 0   ; c) = 2\pi i^{\vkappa+1} \! \int\!\!\int_{\BR_+^2} \! V_{\vkappa} (x_1, x_2)   J_{\vkappa}\bigg(\frac{4\pi\sqrt{x_1 x_2}}{|c|}\bigg)^2   \frac {\nd x_1 \nd x_2} {\sqrt{x_1 x_2}} ,
\end{equation}
where, as in \eqref{3eq: Vk(x1,x2)}, 
\begin{equation*}
	V_{\vkappa} (x_1, x_2) =	V_{\vkappa} \bigg(\frac{x_1}{\sqrt D}\bigg)V_{\vkappa}\bigg(\frac{x_2}{\sqrt D}\bigg) . 
\end{equation*}
Hence, by the definition of $\EuScript{Z}_{\vkappa}$ in \eqref{3eq: Zk},  
\begin{equation}\label{eq: Zk rational start}
	\EuScript{Z}_{\vkappa}
	=
	2 \pi i^{\vkappa+ 1} \sum_{c \, \in \BZ_+} \frac{\varphi(c)}{c^2}
	I_{\vkappa}(0,0; c). 
\end{equation}
Next, we insert the Mellin--Barnes integral representation for $J_{\vkappa} (x)^2$  as in \eqref{eq: MB Bessel square} into \eqref{5eq: Ik(0,0;c)} so that $x_1$, $x_2$, and $c$ become separate. Note that it would be convenient therein to change  $s \ra s - 1/2$.   After this, we calculate the $ x_1 $- and $x_2$-integrals by Mellin inversion, applied to the Mellin integral for $V_{\vkappa} (y) $ as in \eqref{eq: V(y)},   and evaluate the $c$-sum in \eqref{eq: Zk rational start} by the Ramanujan identity
\begin{align*}
	\sum_{c \, \in \BZ_+} \frac{\varphi(c)}{c^{1+ s}}
	=
	\frac{\zeta(  s)}{\zeta(1+ s)},
	\qquad \text{($\mathrm{Re}(s)>1$)}. 
\end{align*}
At any rate, some direct calculations yield 
\begin{equation}
	\EuScript{Z}_{\vkappa} =  \frac { 1 } { \pi i} \int_{(1)} \zeta_{\flat} (2s) \gamma_{\vkappa}^{\flat} (s - 1/2) G(s )^2  D^{s }  \frac {\nd s} {s^2} , 
\end{equation}
where 
\begin{equation}
	\zeta_{\flat} (s) = \zeta( s)\zeta(1+ s), \qquad  \gamma_{\vkappa}^{\flat} (s) = (2\pi)^{-2s} \frac{
		 \Gamma(1+2s)
		\Gamma(\vkappa-s)\Gamma(\vkappa+1+s)}
	{\Gamma (1/2)^2 \Gamma(\vkappa+1/2)^2  }. 
\end{equation}
Note that the integral may be truncated effectively at $ |\mathrm{Im} (s)| = \log k $ due to the exponential decay of $G (s)$ (see \eqref{2eq: G(s)}). By the Stirling formula in \eqref{eqapp: Stirling, quot}, for $ \mathrm{Re} (s) =  1$ and $  |\mathrm{Im} (s)| \leqslant \log k $, we have 
\begin{align*}
	 \frac{ 
	 	\Gamma(\vkappa +1/2 -s )\Gamma(\vkappa+1/2+s)}
	 {  \Gamma(\vkappa+1/2)^2  } =  1 + O \bigg(\frac{\log^2 k}{k}\bigg). 
\end{align*}
Consequently, along with the Legendre duplication formula $\sqrt{\pi} \Gamma (2s) = 2^{2s-1} \Gamma (s) \Gamma (s + 1/2) $, we obtain 
\begin{equation}
	\EuScript{Z}_{\vkappa} =   \frac { 1 } {2 \pi i} \int_{(1)} \xi (2s) \xi (1+2s) G(s )^2  D^{s }  \frac {\nd s} {s^2} + O_{D} \bigg(\frac {\log^3 k} {k} \bigg), 
\end{equation}
as desired. 

\subsection{Proof of Lemma~\ref{lem: zero-freq, bound}: the Irrational Case}
\label{subsec: proof of zero freq bound}
Now, suppose that $c\in\SO\smallsetminus\BZ$. By Lemma \ref{lem: T(0,0;c)}, we   set $ c = d \epsilon$, for $d \in \BZ_+$ and $\epsilon \in \mathrm{U} \smallsetminus \{1 \} $. For simplicity, let us assume that $ c \in \SO^{+}$ so as to avoid writing $| \ |$ everywhere. 

By symmetry, we may assume that $0 < \epsilon < 1   $. For $ c ,  c'  \Lt k^{\vepsilon}$ as in \eqref{eq: range of c}, we have $ d ,  \epsilon'  \Lt k^{\vepsilon}$, so the numbers of  such $d$ and $\epsilon$ are  $ O_{\mathbf{F}, \vepsilon} (k^{\vepsilon}) $, and $ T (0, 0; d \epsilon) = O (k^{\vepsilon}) $. 

 Recall from \eqref{3eq: Z(c)} and \eqref{3eq: IX(n;c)} that we need to consider the integral 
\begin{equation}\label{4eq: IX(0,0;c)}
	 I_{X_1, X_2}   (0   , 0   ; d \epsilon) = \frac{2\pi i^{\vkappa+1}}{\sqrt{X_1 X_2}}   \int\!\!\int  \varww_{X_1, X_2} (x_1, x_2)  NJ_{\vkappa}\bigg(\frac{4\pi\sqrt{x_1 x_2}}{d \epsilon }\bigg)   {\nd x_1 \nd x_2}, 
\end{equation}
for $  
k^{1-\vepsilon} \Lt	X_1, X_2 \Lt k^{1+\vepsilon} $ as in \eqref{3eq: range of X}. By the discussion above, it suffices to establish the following bound for $I_{X_1, X_2}   (0   , 0   ; d \epsilon)   $. 

\begin{lem}
	With the above notations and assumptions, we have 
	\begin{equation}\label{5eq: IX(0; c)}
		I _{X_1, X_2}   (0   , 0   ; d \epsilon) = O_{\mathbf{F}, \vepsilon} \bigg(\frac {k^{\vepsilon}} {\sqrt{k}} \bigg). 
	\end{equation}
\end{lem}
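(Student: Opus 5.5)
The plan is to reduce \eqref{5eq: IX(0; c)} to a one-dimensional oscillatory integral in the variable $u=\sqrt{x_1x_2}$, and then use Lemma \ref{lem: Olver}. The key point is that the two Bessel arguments have distinct dilation factors $1/\epsilon\neq\epsilon'$.

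\emph{Reduction.} Since $c=d\epsilon\in\SO^+$ and $0<\epsilon<1$, we have $\epsilon'=\epsilon^{-1}>1$. So
\begin{equation*}
NJ_{\vkappa}\Big(\frac{4\pi\sqrt{x_1x_2}}{d\epsilon}\Big)=J_{\vkappa}(a u)\,J_{\vkappa}(b u),\qquad u=\sqrt{x_1x_2},\quad a=\frac{4\pi}{d\epsilon},\quad b=\frac{4\pi\epsilon}{d},
\end{equation*}
with $a/b=\epsilon^{-2}\geqslant\epsilon_0^{-2}>1$ fixed, where $\epsilon_0<1$ is the fundamental unit. Substitute $x_1=ut$, $x_2=u/t$, so that $\nd x_1\nd x_2=2u\,\nd u\,\nd t/t$. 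The $t$-range is of bounded logarithmic length, and $u\asymp\sqrt{X_1X_2}$. Thus the left side of \eqref{5eq: IX(0; c)} is an average over $t$ of
\begin{equation*}
\int W_t(u)\,J_{\vkappa}(au)J_{\vkappa}(bu)\,\nd u ,
\end{equation*}
where $W_t(u)=2u\,\varww_{X_1,X_2}(ut,u/t)/(t\sqrt{X_1X_2})$ is smooth, supported on $u\asymp U:=\sqrt{X_1X_2}$, of size $O(1)$, and satisfies $u^jW_t^{(j)}\Lt_j 1$. It therefore suffices to show that this $u$-integral is $O(k^{\vepsilon}/\sqrt k)$, uniformly in $t$.

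\emph{Case analysis.} Write $au=\vkappa x_a$ and $bu=\vkappa x_b$, with $x_a/x_b=\epsilon^{-2}$. Because the two dilations differ by a fixed factor, at most one of $x_a,x_b$ can lie in the transition zone $|x-1|\leqslant\vkappa^{-2/3}$. I split the $u$-range by a smooth partition of unity adapted to the scales $|x_a-1|$ and $|x_b-1|$, dyadically from $\vkappa^{-2/3}$ up to $O(k^{\vepsilon})$.
\begin{itemize}
\item[(i)] If one argument is below the transition point, i.e.\ $x<1-\vkappa^{-2/3}$, I use Lemma \ref{lem: Olver}(2) for that factor and (1)/(3$'$) as a crude bound $O(\vkappa^{-1/3})$ for the other. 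The exponential decay in (2) makes the contribution of $1-x\geqslant\Delta$ decay like $\exp(-\vkappa\Delta^{3/2})$. Summing gives $\Lt\vkappa\cdot\vkappa^{-2/3}\cdot\vkappa^{-1/2}\cdot\vkappa^{-1/3}=\vkappa^{-1/2}$, counting the effective $u$-length $\vkappa\cdot\vkappa^{-2/3}$.
\item[(ii)] In the transition zone of one factor, the $u$-length is $\asymp\vkappa^{1/3}$. That factor is $O(\vkappa^{-1/3})$ by (1), and the other is $O(\vkappa^{-1/2})$ by (3$'$) or (2), since it is a fixed distance from $1$. The total is $O(\vkappa^{-1/2})$.
\item[(iii)] The remaining main range has both $x_a,x_b>1+\vkappa^{-2/3}$. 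Here I insert \eqref{Beq: Olver} for both factors.
\begin{itemize}
\item[$\bullet$] Cross terms involving an error $O(1/\vkappa x)$ contribute $\Lt U\cdot\vkappa^{-1}\cdot\vkappa^{-1/2}(x^2-1)^{-1/4}$, which integrates to $O(k^{\vepsilon}\vkappa^{-1/2})$ because $\int(x-1)^{-1/4}$ is harmless.
\item[$\bullet$] The main terms are
\begin{equation*}
\vkappa^{-1}\int W\,\frac{W_\pm(\cdot)W_\pm(\cdot)}{(x_a^2-1)^{1/4}(x_b^2-1)^{1/4}}\;e\big(\pm\vkappa\gamma(x_a)\pm\vkappa\gamma(x_b)\big)\,\nd u ,
\end{equation*}
with phase derivative
\begin{equation*}
\frac{\nd}{\nd u}\,\vkappa\gamma(au/\vkappa)=\frac{\sqrt{a^2u^2-\vkappa^2}}{u}.
\end{equation*}
For both sign combinations the $u$-derivative of the total phase is $\Gt 1$ in absolute value. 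For the sum this is clear. For the difference, $a>b$ gives
\begin{equation*}
\sqrt{a^2u^2-\vkappa^2}-\sqrt{b^2u^2-\vkappa^2}\;\geqslant\;(a-b)\,u\;\Gt\;1 .
\end{equation*}
On a dyadic piece where $x-1\asymp\Delta$ (with $\Delta\geqslant\vkappa^{-2/3}$), the amplitude varies on $u$-scale $\vkappa\Delta$. Repeated integration by parts therefore gains $(\vkappa\Delta)^{-1}\leqslant\vkappa^{-1/3}$ each time, and the phase derivative is actually $\Gt\sqrt\Delta$ or larger on such pieces. The oscillatory main terms are then $O(\vkappa^{-1+\vepsilon})$, after summing the $O(\log k)$ dyadic pieces.
\end{itemize}
\end{itemize}

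The main obstacle I expect is step (iii) near the transition edge. There the amplitude $(x^2-1)^{-1/4}$ and the functions $W_\pm(\vkappa\gamma(x))$ have derivatives controlled only through $\gamma^jW^{(j)}_\pm\Lt1$, with $\vkappa\gamma(x)\asymp\vkappa(x-1)^{3/2}$ possibly as small as $\asymp1$. So I need to check that each integration by parts genuinely gains a factor $\Lt(\vkappa\Delta\cdot\sqrt\Delta)^{-1}\leqslant1$ rather than losing. If this gets delicate, I would fall back on enlarging the transition cut-off to $|x-1|\leqslant\vkappa^{-2/3+\vepsilon}$ and treat that slightly larger range by the trivial bounds of (ii). This costs only $k^{\vepsilon}$ and gives $O(k^{\vepsilon}\vkappa^{-1/2})$, which is all that \eqref{5eq: IX(0; c)} requires.
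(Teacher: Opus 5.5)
Your reduction to the single variable $u=\sqrt{x_1x_2}$ and the split into ranges below, at, and above the transition point via Lemma \ref{lem: Olver} match the paper. Case (i) has a bookkeeping slip. Near the edge, where $1-x\asymp\vkappa^{-2/3}$, the factor bounded by Lemma \ref{lem: Olver}(2) has size $\vkappa^{-1/2}(1-x)^{-1/4}\asymp\vkappa^{-1/3}$, not $\vkappa^{-1/2}$. Combined with your crude $O(\vkappa^{-1/3})$ for the other factor, case (i) as written gives only $O(\vkappa^{-1/3})$.

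The fix is the observation you already make in (ii). Only the smaller argument $x_b$ can sit just below $1$; if $x_a$ does, then $x_b=\epsilon^{2}x_a$ is a fixed distance below $1$ and everything is exponentially small. When $x_b$ is just below $1$, $x_a=\epsilon^{-2}x_b$ is a fixed distance above $1$, so the other factor is $O(\vkappa^{-1/2})$ by (3$'$). With that bound, (i) gives $O(\vkappa^{-1/2+\vepsilon})$. The paper avoids this step by taking the cut-off $\tau=\vkappa^{\vepsilon-2/3}$, which is your fallback: the lower range then becomes negligible outright, and the enlarged transition range costs only $\vkappa^{\vepsilon}$.

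The genuine difference from the paper is the main range.
\begin{itemize}
\item \emph{Your route (first derivative).} The inequality $\sqrt{A-K}-\sqrt{B-K}\geqslant\sqrt A-\sqrt B$ gives $|f'|\geqslant a-b\Gt 1/d\Gt k^{-\vepsilon}$ for both sign patterns. Your display should read $\geqslant (a-b)u\Gt u$, not $\Gt 1$. Repeated integration by parts via Lemma \ref{lem: stationary phase, dim 1} then makes these terms negligible.
\item \emph{The paper's route (second derivative).} It applies Lemma \ref{lem: 2nd derivative}, using a lower bound for $|f''|$ that comes from $\epsilon'^{\,2}>1$, together with only a total-variation bound on the amplitude. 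This gives $O(\vkappa^{-1/2+\vepsilon})$ per dyadic piece: no better than the transition range, but enough.
\item \emph{Trade-off.} Your route gives a stronger bound on this range but needs bounds on all derivatives of the amplitude, including $W_\pm(\vkappa\gamma(\cdot))$. The paper's needs less input.
\end{itemize}

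Your worry about the edge of the main range is unfounded. The $a$-term alone forces $|f'|\Gt k^{-\vepsilon}$, not merely $\sqrt\Delta$. So for $\Delta\leqslant 1$ you may take $R\Gt k^{-\vepsilon}$, $P=Q=\vkappa\Delta\geqslant\vkappa^{1/3}$ and $Z=\vkappa\Delta^{3/2}$ in Lemma \ref{lem: stationary phase, dim 1}; this $Z$ also dominates the $a$-part of $f^{(i)}$. Each step then gains $\vkappa^{-1/3+\vepsilon}$, even at $\Delta=\vkappa^{-2/3}$.
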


\begin{proof}
	 First let us set 
	 \begin{align*}
	 	x = \frac {4\pi \sqrt{x_1 x_2}} {d}, \qquad  X = \frac {4\pi \sqrt{X_1 X_2}} {d}, 
	 \end{align*}
 then the estimate in \eqref{5eq: IX(0; c)} is reduced to 
 \begin{equation}\label{5eq: IX(epsilon)}
 	I_{X} (\epsilon) = \int \varww_{X} (x)  J_{\vkappa} (x\epsilon) J_{\vkappa} (x\epsilon') \nd x \Lt  \frac {\vkappa^{\vepsilon}} {\sqrt{\vkappa}},
 \end{equation}
for $ \varww_X (x) \in C_c^{\infty} [X, 2 X]$ such that $ X ^{j}  \varww_{X }^{(j )} (x ) \Lt_{j }  1 $, and $ k^{1-\vepsilon} \Lt	X  \Lt k^{1+\vepsilon} $. 

Our   task is to prove \eqref{5eq: IX(epsilon)} by the Olver asymptotic formula (in the form of Lemma \ref{lem: Olver}) and the second derivative test (Lemma \ref{lem: 2nd derivative}). 
Put 
\begin{align*}
	\tau = \frac {\vkappa^{\vepsilon}} {\vkappa^{2/3}}. 
\end{align*}   
Note that $\epsilon \epsilon' = 1$. Write  \begin{equation*}
	y = \frac {x \epsilon} {\vkappa}, \qquad y' = \frac {x \epsilon' } {\vkappa}. 
\end{equation*}
Let us split $ I_{X} (\epsilon) $ as 
\begin{equation*}
	I_{X} (\epsilon) = I_{X}^{0} (\epsilon) + I_{X}^{\flat} (\epsilon) + I_{X}^{\infty} (\epsilon) , 
\end{equation*}
according to a suitable smooth partition of unity $ \varvv^{0} (y) + \varvv^{\flat} (y) + \varvv^{\infty} (y) \equiv 1  $ for
\begin{align*}
	(0, \infty) = (0,  1 - \tau  ] \cup [  1 -  {2} \tau ,  1 +  {2}  \tau  ] \cup [ 1 +   \tau , \infty). 
\end{align*}
By \eqref{2eq: prelim bounds} and \eqref{12eq: Bessel, smaller, K}, the contribution
$I_X^0(\epsilon)$ is negligible.  
For $ |y - 1 | \leqslant  {2} \tau $ in the transition range, since $y' = \epsilon^{\prime \, 2} y$ and $\epsilon ' > 1$, there is a  constant $ c_{\mathbf{F}} > 1 $ such that $ y' \geqslant c_{\mathbf{F}}  $. Thus, by Lemma \ref{lem: Olver} (1) and (3$^{\prime}$), 
\begin{equation*}
	J_{\vkappa}(\vkappa y)\Lt  \frac 1 {\vkappa^{ 1/3}},\qquad
	J_{\vkappa}(\vkappa y')\Lt_{\bfF} \frac 1 {\vkappa^{1/2}}.
\end{equation*}
As the support of $ \varvv^{\flat} (x \epsilon / \vkappa) $  has length
$O(\vkappa\tau \epsilon' ) = O (\vkappa^{1/3+\vepsilon}  )$, we obtain the bound
\begin{align*}
	 I_X^{\flat}(\epsilon)
	 \Lt_{\bfF,\vepsilon} \frac {\vkappa^{\vepsilon}} {\sqrt{\vkappa}}.
\end{align*}
Therefore, it remains to analyze $ I_{X}^{\infty} (\epsilon) $. Keep in mind that  we still have $y' \geqslant c_{\mathbf{F}} > 1$ on the support of $ \varww_{  X} (x) \varvv^{\infty} (x \epsilon / \vkappa) $. Next, we insert the asymptotic formula \eqref{Beq: Olver} in Lemma \ref{lem: Olver} (3) into the integral $I_{X}^{\infty} (\epsilon)$ (see \eqref{5eq: IX(epsilon)}).  It is clear that the error terms contribute  $ O (\vkappa^{\vepsilon} / \sqrt{\vkappa }) $.  For the four main-term integrals,  let us further apply a smooth dyadic partition to $\varvv^{\infty} (y) $. Thus,  for $\tau \Lt \delta \Lt \vkappa^{\vepsilon}$ and   $ \vkappa  \Lt	X  \Lt \vkappa^{1+\vepsilon} $, we need to estimate 
\begin{equation}\label{5eq: Idelta(epsilon)}
	I_{\delta, X}^{_{\pm \pm}} (\epsilon) = \frac 2 {\vkappa}  \int  \varww_{\delta, X}^{_{\pm \pm}} (x; \epsilon) \exp (i f^{\phantom{1}}_{^{\pm \pm}} (x; \epsilon) ) \nd x , 
\end{equation} 
with the weight function
\begin{align*}
	 \varww_{\delta, X}^{_{\pm \pm}} (x; \epsilon)   = \frac {\varww_{  X} (x) \varvv_{\delta} ( y  ) W_{\pm} (\vkappa \gamma (y)) W_{\pm} (\vkappa \gamma (y'))  } { (y^2-1)^{1/4} (y'^{\,2} -1)^{1/4} }   , 
	  %=  \frac {\varww_{  X} (x) \varvv_{\delta} ( {x \epsilon} / {\vkappa}  )  } { (({x \epsilon} /{\vkappa})^2-1)^{1/4} ( ( {x \epsilon' } /{\vkappa})^{ 2} -1)^{1/4} }   , 
\end{align*}
and the phase function
\begin{equation*}
	f^{\phantom{1}}_{^{\pm \pm}} (x; \epsilon) =   \pm {\vkappa} \gamma (y)
	\pm {\vkappa} \gamma (y') = \pm {\vkappa} \gamma \Big(   \frac{x\epsilon}{\vkappa}   \Big)
	\pm {\vkappa} \gamma\Big(\frac{x\epsilon'}{\vkappa}\Big), 
\end{equation*}
where $ \varvv_{\delta} (y) \in C_c^{\infty} [1 + \delta, 1 + 2 \delta]  $ and $  \varww_{X} (x) \in C_c^{\infty} [X, 2 X] $ satisfy $ \delta ^{j}  \varvv_{\delta }^{(j )} ( y ) \Lt_{j }  1 $ and $ X ^{j}  \varww_{X }^{(j )} (x ) \Lt_{j }  1 $. 
Recall from \eqref{2eq: gamma} that 
\begin{equation*}
	\gamma (y) = \sqrt{y^2-1} - \mathrm{arcsec}\, y ,
\end{equation*}
so that 
\begin{equation*}
	\gamma' (y) = \frac{\sqrt{y^2 - 1}}{y} , \qquad   \gamma'' (y) = \frac{1}{y^2 \sqrt{y^2 - 1}} . 
\end{equation*}
Consequently, 
\begin{equation*}
	\frac {\partial^2 f^{\phantom{1}}_{^{\pm \pm}} (x; \epsilon)} {\partial x^2} 
	=\frac{\vkappa^2}{ x^2}\bigg( \!
	\pm\frac{1}{\sqrt{x^2 \epsilon^2-\vkappa^2}}
	\pm\frac{1}{\sqrt{ x^2\epsilon'^{\, 2} -\vkappa^2 }}
	\bigg).
\end{equation*}
Since 
\begin{align*}
	 \frac {\sqrt{ x^2\epsilon'^{\, 2} -\vkappa^2 }} {\sqrt{x^2 \epsilon^2-\vkappa^2}} > \frac {\epsilon'} {\epsilon} = \epsilon'^{\,2} > c_{\mathbf{F}} > 1, 
\end{align*}
we obtain the lower bound
\begin{equation}\label{5eq: f''}
	\bigg| \frac {\partial^2 f^{\phantom{1}}_{^{\pm \pm}} (x; \epsilon)} {\partial x^2}  \bigg| \Gt_{\mathbf{F}} \frac {\vkappa^2 }   {x^2 \sqrt{x^2 \epsilon^2-\vkappa^2}} \Gt  \frac{\vkappa}{X^2 \sqrt{\delta( \delta + 1)} }, 
\end{equation}
on the support of $\varww_{  X} (x) \varvv_{\delta} (x\epsilon / \vkappa)$. 
Moreover, it is routine to verify that the variation 
\begin{equation}\label{5eq: variation}
	\textit{Var} \,(\varww_{\delta, X}^{_{\pm \pm}} (\, \cdot \, ; \epsilon)) \Lt \frac{1}{  \delta^{1/4}( \delta + 1)^{3/4} } . 
\end{equation}
Finally, in view of \eqref{5eq: Idelta(epsilon)}--\eqref{5eq: variation}, it follows from Lemma \ref{lem: 2nd derivative} that 
\begin{equation*}
	 I_{\delta, X}^{_{\pm \pm}} (\epsilon) \Lt   \frac 1 {\vkappa} \cdot \frac{X}{\sqrt{\vkappa    (\delta+1)}  }  \Lt \frac {\vkappa^{\vepsilon}} {\sqrt{\vkappa}} , 
\end{equation*}
as desired.  
\end{proof}

\section{Estimate for the Dual Sum: Proof of Lemma \ref{lem: dual}} 

In view of  \eqref{3eq: IX (n;r;c)}--\eqref{3eq: dual Q(r;c)},  we need to consider the sum 
\begin{align*}
	\EuScript{Q}^{\flat}_{X_1, X_2}(r;c) = \mathop{\mathop{\sum \sum}_{ n_1   , n_2     \in \BZ  }}_{(n_1   , n_2   ) \neq (0, 0)}  
	\big| I_{X_1, X_2} (n_1, n_2; r;  |N(c)| ) \big|, 
\end{align*} in which the integral 
\begin{equation*}
I_{X_1, X_2} (n_1, n_2; r;  q) =	\frac{1}{\sqrt{X_1 X_2}}\iint
	\varww_{X_1, X_2} (x_1,x_2)e\bigg(\frac{x_1x_2/r-n_1 x_1-n_2 x_2}{q} \bigg)\nd x_1\nd x_2, 
\end{equation*}
for 
\begin{equation} \label{6eq: ranges}
		k^{1-\vepsilon} \Lt	X_1, X_2 \Lt k^{1+\vepsilon} , \qquad q \Lt k^{\vepsilon}, \qquad r \Gt k  . 
\end{equation}  
By the changes $x_1 \ra X_1 x_1$ and $ x_2 \ra X_2 x_2 $, we rewrite the integral as 
\begin{eqnarray*}
	\sqrt{X_1 X_2} \iint
	\varww  (x_1,x_2)e\bigg(\frac{X_1  X_2 x_1x_2/r- X_1 n_1 x_1- X_2 n_2 x_2}{q} \bigg)\nd x_1\nd x_2,
\end{eqnarray*}
where $\varww (x_1, x_2) \in C_c^{\infty} ([1, 2]^2)$ has bounds $ \varww^{(j_1, j_2)} (x_1, x_2) \Lt_{j_1, j_2} 1  $. 

First, we consider the case when exactly one dual variable is equal to $0$. Suppose, for example, that $n_1\neq0$ and $n_2=0$. It would be convenient to set $ v = x_1 x_2 $ as the new variable so that the phase function is turned into
\begin{align*}
	 \frac{X_1  X_2 v /r - X_1 n_1 x_1 }{q}.  
\end{align*}
Now its partial derivative with respect to $ x_1 $ is  $ - X_1 n_1  / q$. As $ X_1 / q \Gt k^{1-\vepsilon}$, it follows from Lemma \ref{lem: stationary phase, dim 1} that the integral is negligibly small (and of rapid decay in $n_1$). 

Next, we consider the case when $n_1 n_2 \neq 0$. As the $x_1$- and $x_2$-partial derivatives of the phase function read
\begin{align*}
	 \frac {X_1 X_2 x_2 } {q r} - \frac {X_1 n_1} {q} , \qquad \frac {X_1 X_2 x_1 } {q r} - \frac {X_2 n_2} {q}, 
\end{align*}
respectively, by applying Lemma \ref{lem: stationary phase, dim 1}, we infer that  the $x_1$- or $x_2$-integral is negligibly small unless 
\begin{equation}\label{eq: range of n1, n2}
	\bigg|\frac{ X_2 x_2 }{r} - n_1 \bigg|
	\Lt \frac {q k^{\vepsilon}} {X_1}, \qquad  	\bigg|\frac{ X_1 x_1 }{r} - n_2 \bigg|
	\Lt \frac {q k^{\vepsilon}} {X_2}. 
\end{equation}
As $ X_2 / r, X_1 / r \Lt k^{\vepsilon} $ and $q/X_1, q/X_2 \Lt k^{\vepsilon} / k$ by \eqref{6eq: ranges}, we may restrict the dual $n_1$- and $n_2$-sums to the range $ 0 <  n_1,  n_2  \Lt k^{\vepsilon} $. Further, the range $ r \Gt k $ as in \eqref{6eq: ranges}   may be strengthened  into 
\begin{equation}
	\label{6eq: range of r} 
	\vkappa \Lt r < \vkappa^{1+\vepsilon}. 
\end{equation}  Now the $x_2$-interval defined by the first inequality in \eqref{eq: range of n1, n2} has length $ O (r q k^{\vepsilon}/ X_1 X_2) = O (k^{1+\vepsilon}/X_1 X_2) $, so 
	\begin{equation}
		I_{X_1, X_2} (n_1, n_2; r;  q) \Lt \sqrt{X_1 X_2} \frac {k^{1+\vepsilon}} {X_1 X_2} \Lt   {k^{\vepsilon}}  .  
	\end{equation}

At any rate, $\EuScript{Q}^{\flat}_{X_1, X_2}(r;c) $ is negligibly small unless $ k \Lt r \Lt k^{1+\vepsilon} $ by  \eqref{6eq: range of r}, and for such $r$ we conclude that 
\begin{equation}\label{6eq: bound for Q(r;c)}
	\EuScript{Q}^{\flat}_{X_1, X_2}(r;c) \Lt k^{\vepsilon}. 
\end{equation} 
Therefore, up to a negligible error, it follows from \eqref{3eq: dual O(c)} and \eqref{6eq: bound for Q(r;c)} 
that 
\begin{equation*}
	\widetilde{\EuScript{O}}_{X_1, X_2}  (c) \Lt \vkappa^{\vepsilon} \! \int_{\vkappa/8\pi}^{\vkappa^{1+\vepsilon}} | J_{\vkappa}(4\pi r) |    \frac{\nd r}{r} .
\end{equation*}
Finally, by Cauchy--Schwarz and \eqref{2eq: integral of J(x)2}, 
\begin{align*}
	 \widetilde{\EuScript{O}}_{X_1, X_2}  (c)^2  \Lt \vkappa^{\vepsilon} \! \int_{\vkappa/8\pi}^{\vkappa^{1+\vepsilon}}  J_{\vkappa}(4\pi r) ^2  \frac{\nd r}{r} < \vkappa^{\vepsilon} \! \int_{0}^{\infty}   J_{\vkappa}(4\pi r) ^2  \frac{\nd r}{r} = \frac {\vkappa^{\vepsilon}} {2 \vkappa}, 
\end{align*}
as desired.

\section{Proof of Theorem \ref{thm: 1st moment}} 

More generally, for $ m, n \in \BZ_+$, define 
\begin{equation}
\EuScript{M}_k^{0} (m, n) = 
\sum_{f\in H_k}\omega_f\lambda_f(m) \lambda_f(n) , 
\end{equation}
\begin{equation}
	\EuScript{M}_k^{1} (m) = 
	\sum_{f\in H_k}\omega_f\lambda_f(m) L(1/2,\As(f)).  
\end{equation}

\begin{proposition}
Let $A > 0$.  	We have 
	\begin{equation}\label{7eq: M0k(m)}
		 \EuScript{M}_k^{0} (m, n) = { (k-1)^2} D \sqrt{  D  } \cdot \delta (m, n) +
		 O_{\bfF,\vepsilon, A}(k^{-A}), 
	\end{equation}
for $ m n  \Lt k^{2- \vepsilon}$, where $\delta (m, n)$ is the Kronecker $\delta$ that detects $m = n$,  and 
\begin{equation}\label{7eq: M1k(m)} 
	\EuScript{M}_k^{1} (m) = {  (k-1)^2} D \sqrt{\frac D m} 
	\bigg(
	\psi\bigg(k-\frac12\bigg)
	+\gamma
	+\log\bigg(\frac{\sqrt D}{16\pi^2 m}\bigg)
	\bigg)
	+
	O_{\bfF,\vepsilon, A}(k^{-A}), 
\end{equation}
for $ m \Lt k^{1-\vepsilon}$, where  $\psi (s)  $ is  the di-gamma function and $\gamma$ is the Euler constant.  
\end{proposition}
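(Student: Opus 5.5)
Both formulas will come out of the Petersson trace formula \eqref{eq: Petersson} with $\vnu_1 = m$, $\vnu_2 = n$ rational. The plan is to show that the entire Kloosterman term is negligible in the stated ranges, so that only the diagonal survives.

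\textbf{Step 1: the diagonal term for \eqref{7eq: M0k(m)}.} For rational $m, n \in \BZ_+$ we have $m \sim n$ if and only if $m/n \in \mathrm{U}^2$. A rational totally positive unit square is $1$, so the $\delta$-symbol reduces to the Kronecker $\delta(m,n)$. Multiplying \eqref{eq: Petersson} by $(k-1)^2 D$ gives the main term $(k-1)^2 D\sqrt D\,\delta(m,n)$.

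\textbf{Step 2: the off-diagonal terms for \eqref{7eq: M0k(m)}.} Here the Bessel arguments are $4\pi\sqrt{mn}/|c|$ and $4\pi\sqrt{mn}/|c'|$.
\begin{itemize}
\item Since $N(c) \in \BZ\smallsetminus\{0\}$, we have $|c|\,|c'| \geqslant 1$, so $\min(|c|,|c'|)\leqslant 1$ is impossible only if both factors exceed $1$; in any case $\max(|c|,|c'|) \geqslant 1$.
\item For the factor with the larger of $|c|, |c'|$, the argument is at most $4\pi\sqrt{mn} \Lt k^{1-\vepsilon/2}$, which is far below $\vkappa/2$.
\item By \eqref{2eq: prelim bounds}, that factor is at most $\big(e x/2\vkappa\big)^{\vkappa}$ with $x \Lt \vkappa^{1-\vepsilon/2}$, i.e. $\Lt k^{-\vepsilon \vkappa/2}$. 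The other factor is bounded by $1$.
\end{itemize}
For summability over $c$ we need decay in $c$, and the trivial bound gives it. Write $\mathrm{U}=\pm\epsilon_0^{\BZ}$ and parametrize $c$ by its ideal and a unit. We have $|S_{\bfF}| \leqslant |N(c)|$. Then
\[
J_{\vkappa}(a/|c|)\,J_{\vkappa}(a/|c'|) \Lt \min\big(1,(ea/2\vkappa|c|)^{\vkappa}\big)\,\min\big(1,(ea/2\vkappa|c'|)^{\vkappa}\big),
\]
with $a = 4\pi\sqrt{mn}$. For each ideal, the sum over units of this product is geometric in $\epsilon_0^{\pm j}$, and the sum over ideals converges because of the factor $|N(c)|^{-\vkappa}$. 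The total is $O(k^{-A})$. This bookkeeping is routine but is the one place requiring care: the unit sum must converge uniformly. It does, because a very large $|c|$ forces $|c'|$ small, and vice versa, with exponential unit growth.

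\textbf{Step 3: reduce \eqref{7eq: M1k(m)} to Steps 1--2.} Insert the approximate functional equation \eqref{eq: AFE}:
\[
\EuScript{M}^1_k(m) = 2\sum_{n} n^{-1/2}\,V_{\vkappa}(n/\sqrt D)\,\EuScript{M}^0_k(m,n).
\]
By \eqref{2eq: bound for Vk(y)}, the terms with $n \Gt k^{1+\vepsilon/3}$ contribute $O(k^{-A})$; use the trivial bound $\sum_f \omega_f|\lambda_f(m)\lambda_f(n)| \Lt (mn)^{O(1)} k^{O(1)}$ via Cauchy--Schwarz and Petersson. For the remaining $n$ we have $mn \Lt k^{2-2\vepsilon/3}$ because $m \Lt k^{1-\vepsilon}$, so \eqref{7eq: M0k(m)} applies. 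Hence
\[
\EuScript{M}^1_k(m) = 2(k-1)^2 D\sqrt D\, m^{-1/2}\, V_{\vkappa}(m/\sqrt D) + O(k^{-A}).
\]

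\textbf{Step 4: evaluate $V_{\vkappa}(m/\sqrt D)$.} In \eqref{eq: V(y)} with $y = m/\sqrt D$, shift the contour left past $v=0$. The remaining integral is as small as we like: at $\mathrm{Re}(v) = -B$ the Gamma ratio is $\Lt k^{-B}$ and $y^{B}\Lt k^{B(1-\vepsilon)}$, so it is $\Lt k^{-\vepsilon B}$. The pole at $v=0$ is double, from $\zeta(1+2v)/v$. Write
\[
\zeta(1+2v) = \tfrac{1}{2v} + \gamma + O(v),\qquad G(0)=1,\qquad G'(0)=0,
\]
and
\[
\frac{\gamma_{\vkappa}(1/2+v)}{\gamma_{\vkappa}(1/2)} = 1 + v\big(-2\log 2\pi + \psi(\vkappa+\tfrac12) + \psi(\tfrac12)\big) + O(v^2),
\qquad \psi(\tfrac12) = -\gamma - 2\log 2 .
\]
The residue is
\[
\gamma + \tfrac12\big(\psi(k-\tfrac12) - \gamma - 2\log 2 - 2\log 2\pi - \log y\big)
= \tfrac12\Big(\psi(k-\tfrac12) + \gamma + \log\frac{\sqrt D}{16\pi^2 m}\Big).
\]
Multiplying by $2(k-1)^2 D\sqrt D\, m^{-1/2}$ gives $(k-1)^2 D\sqrt{D/m}\,(\cdots)$, which is \eqref{7eq: M1k(m)}.
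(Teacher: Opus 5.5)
Your proposal is correct and follows the paper's own route. The Petersson formula with the crude bound $J_{\vkappa}(x)\Lt (ex/2\vkappa)^{\vkappa}$ makes the Kloosterman terms negligible for $mn\Lt k^{2-\vepsilon}$. The approximate functional equation then reduces $\EuScript{M}_k^1(m)$ to $2(k-1)^2D\sqrt{D/m}\,V_{k-1}(m/\sqrt D)$. Finally, a far-left contour shift leaves only the double-pole residue at $v=0$, which you compute correctly.

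The one check you leave implicit, as the paper also does, is that no other poles are crossed in Step 4. The poles of $\Gamma(1/2+v)$ at $v=-1/2,-3/2,-5/2,\dots$ are cancelled by $G(-1/2)=0$ and by the trivial zeros of $\zeta(1+2v)$. The poles of $\Gamma(\vkappa+1/2+v)$ lie to the left of $\mathrm{Re}(v)=-B$ once $B<\vkappa$.
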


\begin{proof}
	It is clear that  \eqref{7eq: M0k(m)} follows directly from the Petersson trace formula in \eqref{eq: Petersson} and the bound for the Bessel function in \eqref{2eq: prelim bounds}. Similarly, 
	\begin{align*}
		 \EuScript{M}_k^{1} (m) = { 2 (k-1)^2} D \sqrt{  \frac D m } V_{k-1} \bigg(\frac{m}{\sqrt D}\bigg) +  O_{\bfF,\vepsilon, A }(k^{-A}), 
	\end{align*} 
which is a direct consequence of \eqref{eq: AFE}, \eqref{2eq: bound for Vk(y)}, \eqref{eq: Petersson}, and \eqref{2eq: prelim bounds}. Recall from \eqref
{eq: V(y)} that 
\begin{equation*}
	V_{\vkappa} \bigg(\frac{m}{\sqrt D}\bigg) =\frac{1}{2\pi i}\int_{(3)}  \zeta(1+2 v)
	\frac{\gamma_{\vkappa} (1/2+ v )}{\gamma_{\vkappa} (1/2 )}  G (v)     \big({\sqrt{D}} / {m} \big) ^{v} \frac{\nd v}{v} . 
\end{equation*}
Thus \eqref{7eq: M1k(m)}  follows from shifting the integral contour to the far left, say $\mathrm{Re} (v) = - 2/\vepsilon - A /\vepsilon$, and calculating the residue at the double pole $v  = 0$; here we need to use the definition of $ \gamma_{\vkappa} (s) $ and the expansion of $\zeta (1+s)$ as in \eqref{2eq: gamma k(s)} and \eqref{eq: Stieltjes constants definition}.    
\end{proof}

Theorem \ref{thm: 1st moment} is a direct consequence of \eqref{7eq: M0k(m)} and \eqref{7eq: M1k(m)}, along with the Stirling formula (see \eqref{app1: Stirling, 2})
\begin{align*}
		  \psi \bigg(k-\frac12\bigg)  = \log k   + O  \bigg( \frac 1 { k } \bigg) . 
\end{align*}

\appendix

\section{Stirling's Formulae} 

According to \cite[\S\S 1.1, 1.2]{MO-Formulas}, for   $ |\arg (s) | < \pi   $, as $|s| \ra \infty$, we have 
\begin{equation}\label{eqapp: Stirling, 1}
	\log \Gamma (s) =    \bigg( s - \frac 1 2 \bigg)  \log s - s   + \frac 1 2 \log (2\pi) + O  \bigg( \frac 1 {|s|} \bigg)  .  % \frac 1 {12 s} + O  \bigg( \frac 1 {|s|^{3}}  \bigg). 
\end{equation}
\begin{equation}\label{app1: Stirling, 2} 
	\psi (s)   = \log s   + O \lp \frac 1 {|s| } \rp,  
\end{equation}
\delete{\begin{align}%\label{6eq: Stirling, 1}
	\begin{aligned}
		\log \Gamma (s) =  &  \bigg( s - \frac 1 2 \bigg)  \log s - s   + \frac 1 2 \log (2\pi) +   \sum_{m=1}^{M} %\frac {B_{2m}} {2m(2m-1)} 
		 \frac {A_{2m}} {s^{2m-1}}  
		 +   O   \bigg( \frac 1 {|s|^{2M+1}}  \bigg) ,
	\end{aligned}
\end{align} 
where the coefficients $A_{2m}$ and the Bernoulli numbers $B_{2m}$ are related via
\begin{align*}
A_{2m} =	\frac {B_{2m}} {2m(2m-1)} . 
\end{align*} }
Let $ |\arg (s) | < \pi$ and $|s|$ be large. From \eqref{eqapp: Stirling, 1} we deduce the asymptotic formula
\begin{equation}\label{eqapp: Stirling, quot}
	\frac {\Gamma (s+\valpha)} {\Gamma (s)} =  s^{\valpha} \bigg(1 + O  \bigg(\frac { 1 + |\valpha|^2} {|s|} \bigg) \bigg),  % \frac {\valpha^2 - \valpha} {2s} + O \bigg(\frac {|\valpha|^4 + |\valpha|} {|s|^2} \bigg) \bigg), 
\end{equation}
for any $|\valpha| \Lt \sqrt{|s|}$.

	\section{Stationary Phase}
Finally, we record here Lemmas A.1 and A.2 in \cite{AHLQ-Bessel}, as variants of  \cite[Lemma {\rm 8.1}]{BKY-Mass} and \cite[Lemma 5.1.3]{Huxley}.   

\begin{lem}\label{lem: stationary phase, dim 1}
	Let $\varww   \in C_c^{\infty} (a, b)$. Let  $f  \in C^{\infty} [a, b]$ be real-valued.  Suppose that there
	are parameters $P,\, Q,\, R,\, S,\, Z  > 0$ such that
	\begin{align*}
		f^{(i)} (x) \Lt_{ \, i } Z / Q^{i}, \qquad \varww^{(j)} (x) \Lt_{ \, j } S / P^{j},
	\end{align*}
	for  $i \geqslant 2$ and $j \geqslant 0$, and
	\begin{align*}
		| f' (x) | \Gt R. 
	\end{align*}
	Then 
	\begin{align*}
		\int_a^b  e (f(x)) \varww (x)  \nd x \Lt_{A} (b - a) S \bigg( \frac {Z} {R^2Q^2} + \frac 1 {R Q} + \frac 1 {R P} \bigg)^A  
	\end{align*} 
	for any  $A > 0$.
\end{lem}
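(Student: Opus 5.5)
The statement to prove is the non-stationary phase Lemma \ref{lem: stationary phase, dim 1}. I will use repeated integration by parts with the differential operator adapted to $f'$. Put $g = \varww/(2\pi i f')$. Since $\frac{\nd}{\nd x} e(f(x)) = 2\pi i f'(x) e(f(x))$ and $\varww$ has compact support in $(a,b)$, integration by parts gives
\begin{equation*}
\int_a^b e(f(x))\varww(x)\,\nd x = -\int_a^b e(f(x))\, \frac{\nd}{\nd x}\bigg(\frac{\varww(x)}{2\pi i f'(x)}\bigg)\nd x .
\end{equation*}
Iterating, the integral equals $(-1)^n\int_a^b e(f) \, (\mathcal{D}^n \varww)\,\nd x$, where $\mathcal{D} h = (h/(2\pi i f'))'$. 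Bounding trivially gives $(b-a)\sup|\mathcal{D}^n\varww|$. It then suffices to show
\begin{equation*}
\mathcal{D}^n \varww \Lt_n S\bigg(\frac{Z}{R^2Q^2}+\frac1{RQ}+\frac1{RP}\bigg)^n ,
\end{equation*}
and to take $n = \lceil A\rceil$. For non-integral $A$ the bound follows from the integral case, or the quantity in parentheses may be assumed $\leqslant 1$ without loss of generality, since otherwise the trivial bound $(b-a)S$ already suffices.

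The key step is the structure of $\mathcal{D}^n\varww$. By induction, $\mathcal{D}^n \varww$ is a linear combination (with coefficients depending only on $n$) of terms
\begin{equation*}
\frac{\varww^{(\nu_0)}\, f^{(\nu_1)}\cdots f^{(\nu_m)}}{f'^{\,n+m}},
\end{equation*}
where each $\nu_i\geqslant 2$ for $i\geqslant 1$ and $\nu_0+\sum_{i=1}^m(\nu_i-1)=n$. The induction step is routine. Differentiating $\varww^{(\nu_0)}$ raises $\nu_0$ by one. Differentiating some $f^{(\nu_i)}$ raises $\nu_i$ by one. Differentiating the denominator, together with the division by $f'$, introduces a new factor $f''$ and raises the power of $f'$ by two. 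In each case the weighted degree increases by exactly one.

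Inserting the hypotheses, each such term is bounded by
\begin{equation*}
\frac{S}{P^{\nu_0}}\prod_{i=1}^m \frac{Z}{Q^{\nu_i}} \cdot \frac{1}{R^{n+m}}.
\end{equation*}
I then distribute the factors of $R$. Each $f^{(\nu_i)}$ is paired with $R^{-\nu_i}$, and $\varww^{(\nu_0)}$ is paired with $R^{-\nu_0}$; the total is $\nu_0+\sum_i\nu_i = n+m$, as required. This gives
\begin{equation*}
S\,(RP)^{-\nu_0}\prod_{i=1}^m \frac{Z}{R^{\nu_i}Q^{\nu_i}} .
\end{equation*}
Next, write $Z/(RQ)^{\nu}= \big(Z/(R^2Q^2)\big)\cdot (RQ)^{-(\nu-2)}$ for each $\nu_i \geqslant 2$. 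With $U = \frac{Z}{R^2Q^2}+\frac1{RQ}+\frac1{RP}$, each $f$-factor is then at most $U^{1+(\nu_i-2)}=U^{\nu_i-1}$, and the $\varww$-factor is at most $U^{\nu_0}$. Since the exponents sum to $\nu_0+\sum(\nu_i-1)=n$, the product is at most $S U^n$. This yields the claimed bound.

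The only delicate point is the bookkeeping in the combinatorial induction, in particular verifying the invariant $\nu_0+\sum(\nu_i-1)=n$ and that the power of $f'$ is exactly $n+m$. Once that structure is established, the estimate is mechanical.
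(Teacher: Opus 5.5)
Your proof is correct and follows the same route as the source the paper relies on: the paper gives no proof of this lemma and simply quotes \cite{AHLQ-Bessel}, where it is obtained by the repeated integration-by-parts argument of \cite[Lemma 8.1]{BKY-Mass}, expanding $\mathcal{D}^n\varww$ into terms $\varww^{(\nu_0)}f^{(\nu_1)}\cdots f^{(\nu_m)}/f'^{\,n+m}$ as you do. Your absorption $Z/(RQ)^{\nu}\leqslant U^{\nu-1}$ for $\nu\geqslant 2$ is what lets the extra $1/RQ$ term replace BKY's assumption $Z\geqslant 1$, and your reduction for non-integral $A$ (assume $U\leqslant 1$, otherwise use the trivial bound $(b-a)S$) is also correct.
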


\begin{lem}\label{lem: 2nd derivative}
	Let $\varww   \in C_c^{\infty} [a, b]$ and $V$ be its total variation. 	Let $f \in C^{\infty} [a, b]$ be real-valued. If $|f'' (x)| \geqslant \lambda > 0$ on $[a, b]$, then 
	\begin{align*}
		\bigg|\int_a^b  e (f(x)) \varww (x)  \nd x  \bigg| \leqslant \frac {4 V} {\sqrt{\pi \lambda}} . 
	\end{align*}
\end{lem}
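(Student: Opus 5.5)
The plan is to reduce to the unweighted van der Corput second derivative estimate, proved uniformly over all initial subintervals $[a,t]\subseteq[a,b]$, and then to restore the weight $\varww$ by a Riemann--Stieltjes integration by parts.

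First the sign. Since $f''$ is continuous and $|f''|\geqslant\lambda>0$ on $[a,b]$, it has constant sign there. Replacing $f$ by $-f$ amounts to complex conjugation when $\varww$ is real; for complex $\varww$ we reduce instead to the real and imaginary parts of $\varww$, each of total variation at most $V$. So we may assume $f''\geqslant\lambda$, and hence $f'$ is strictly increasing.

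\textbf{Step 1 (first derivative test).} On any subinterval $[\alpha,\beta]$ where $f'$ is monotone and $|f'|\geqslant\delta>0$, we have
\begin{equation*}
\bigg|\int_\alpha^\beta e(f(x))\,\nd x\bigg|\leqslant\frac{1}{\pi\delta}.
\end{equation*}
To see this, write $e(f)=\frac{1}{2\pi i f'}\,\frac{\nd}{\nd x}e(f)$ and integrate by parts. The boundary terms contribute at most $\frac{1}{2\pi}\big(\frac{1}{|f'(\alpha)|}+\frac{1}{|f'(\beta)|}\big)$. The remaining integral is bounded by $\frac{1}{2\pi}\int_\alpha^\beta|\nd(1/f')|$, which telescopes because $1/f'$ is monotone. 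Together these give the constant $1/(\pi\delta)$.

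\textbf{Step 2 (unweighted estimate).} Fix $t\in[a,b]$ and $\delta>0$. Since $f'$ is increasing with $f''\geqslant\lambda$, the set $\{x\in[a,t]: |f'(x)|<\delta\}$ is an interval of length at most $2\delta/\lambda$; there we bound the integrand trivially. Its complement consists of at most two intervals, on each of which $|f'|\geqslant\delta$ and $f'$ is monotone, so Step 1 applies to each. This gives
\begin{equation*}
\bigg|\int_a^t e(f(x))\,\nd x\bigg|\leqslant\frac{2\delta}{\lambda}+\frac{2}{\pi\delta}.
\end{equation*}
Taking $\delta=\sqrt{\lambda/\pi}$, we get $|F(t)|\leqslant 4/\sqrt{\pi\lambda}$ for every $t\in[a,b]$, where $F(t)=\int_a^t e(f(x))\,\nd x$. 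This constant matches the one in the statement of Lemma \ref{lem: 2nd derivative}.

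\textbf{Step 3 (inserting the weight).} Since $\varww\in C_c^\infty[a,b]$, we have $\varww(b)=0$ and $F(a)=0$. Integration by parts then gives
\begin{equation*}
\int_a^b e(f(x))\varww(x)\,\nd x=-\int_a^b F(t)\,\varww'(t)\,\nd t.
\end{equation*}
Its absolute value is at most $\sup_t|F(t)|\cdot\int_a^b|\varww'|=\sup_t|F(t)|\cdot V$. Combined with Step 2 this yields the bound $4V/\sqrt{\pi\lambda}$.

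The only point needing care is Step 2. The bound must hold uniformly in the endpoint $t$, which is why it is proved for every initial interval $[a,t]$ rather than only for $[a,b]$. One must also check that the constants in the split add up to exactly $4/\sqrt{\pi\lambda}$, which requires the sharp constant $1/(\pi\delta)$ in Step 1 and the monotonicity of $1/f'$ there.
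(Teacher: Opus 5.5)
Your argument is correct. It is the standard van der Corput proof: you prove the uniform bound $|F(t)|\leqslant 4/\sqrt{\pi\lambda}$ for $F(t)=\int_a^t e(f(x))\,\nd x$, then integrate by parts against $\varww'$ using $\varww(b)=0$. For the uniform bound you use the first derivative test with the sharp constant $1/(\pi\delta)$ off the interval where $|f'|<\delta$, the trivial bound on that interval, and the choice $\delta=\sqrt{\lambda/\pi}$. The paper gives no proof of this lemma; it quotes it from \cite{AHLQ-Bessel} (Lemma A.2) as a variant of Huxley's Lemma 5.1.3. Your write-up supplies the argument behind that citation.

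There is one slip, in the sign normalization for complex $\varww$. Reducing to the real and imaginary parts of $\varww$ costs a factor $2$ through the triangle inequality, so as written you only get $8V/\sqrt{\pi\lambda}$.

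The normalization is unnecessary anyway. Steps 1--3 use only that $f'$ is strictly monotone with $|f''|\geqslant\lambda$, which holds for either sign of $f''$. Step 3 holds verbatim for complex $\varww$, with $V=\int_a^b|\varww'(t)|\,\nd t$. If you prefer to keep the normalization, use $\int_a^b e(-f)\varww=\overline{\int_a^b e(f)\overline{\varww}}$ together with $V(\overline{\varww})=V(\varww)$, which loses nothing.

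This is worth fixing. The weights $\varww_{\delta,X}^{\pm\pm}$ to which the paper applies the lemma contain the factors $W_{\pm}$ and need not be real-valued.
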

	
	%\bibliographystyle{alphanum}
	%    Insert the bibliography data here.
%	\bibliography{references}

\begin{thebibliography}{AHLQ}
	
	\bibitem[AHLQ]{AHLQ-Bessel}
	K.~Aggarwal, R.~Holowinsky, Y.~Lin, and Z.~Qi.
	\newblock A {B}essel delta method and exponential sums for {${\rm GL}(2)$}.
	\newblock {\em Q. J. Math.}, 71(3):1143--1168, 2020.
	
	\bibitem[Asa]{Asai-1977}
	T.~Asai.
	\newblock On certain {D}irichlet series associated with {H}ilbert modular forms
	and {R}ankin's method.
	\newblock {\em Math. Ann.}, 226(1):81--94, 1977.
	
	\bibitem[BKY]{BKY-Mass}
	V.~Blomer, R.~Khan, and M.~P. Young.
	\newblock Distribution of mass of holomorphic cusp forms.
	\newblock {\em Duke Math. J.}, 162(14):2609--2644, 2013.
	
	\bibitem[CFK{\etalchar{+}}]{Conrey-FKRS-Moments}
	J.~B. Conrey, D.~W. Farmer, J.~P. Keating, M.~O. Rubinstein, and N.~C. Snaith.
	\newblock Integral moments of {$L$}-functions.
	\newblock {\em Proc. London Math. Soc. (3)}, 91(1):33--104, 2005.
	
	\bibitem[Has]{Hasse-NT}
	H.~Hasse.
	\newblock {\em Number {T}heory}. {Grundlehren der
		Mathematischen Wissenschaften}, Vol.~229.
	\newblock Springer-Verlag, Berlin-New York, 1980.
	
	\bibitem[Hux]{Huxley}
	M.~N. Huxley.
	\newblock {\em Area, {L}attice {P}oints, and {E}xponential {S}ums}.  {London Mathematical Society Monographs, New Series}, Vol.~13.
	\newblock The Clarendon Press, Oxford University Press, New York, 1996.
	\newblock Oxford Science Publications.
	
	\bibitem[IK]{IK}
	H.~Iwaniec and E.~Kowalski.
	\newblock {\em Analytic {N}umber {T}heory}.  {  American
		Mathematical Society Colloquium Publications}, Vol.~53.
	\newblock American Mathematical Society, Providence, RI, 2004.
	
	\bibitem[Kha]{Khan-Sym2-Non-vanishing}
	R.~Khan.
	\newblock Non-vanishing of the symmetric square {$L$}-function at the central
	point.
	\newblock {\em Proc. Lond. Math. Soc. (3)}, 100(3):736--762, 2010.
	
	\bibitem[KY]{Khan-Young-Sym2}
	R.~Khan and M.~P. Young.
	\newblock Moments and hybrid subconvexity for symmetric-square {$L$}-functions.
	\newblock {\em J. Inst. Math. Jussieu}, 22(5):2029--2073, 2023.
	
	\bibitem[LQ]{LQi-Asai-LS}
	C.~Li and Z.~Qi.
	\newblock On the second moment of $l (1/2, \textrm{{A}s}(f)\times \phi)$.
	\newblock 2026.
	
	\bibitem[Luo1]{Luo-2003-Hilbert}
	W.~Luo.
	\newblock Poincar\'e{} series and {H}ilbert modular forms.
	\newblock {\em Ramanujan J.}, 7(1-3):129--140, 2003.
	\newblock Rankin memorial issues.
	
	\bibitem[Luo2]{Luo-Asai}
	W.~Luo.
	\newblock Moments of the central {$L$}-values of the {A}sai lifts.
	\newblock {\em Canad. Math. Bull.}, 67(3):796--804, 2024.
	
	\bibitem[MOS]{MO-Formulas}
	W.~Magnus, F.~Oberhettinger, and R.~P. Soni.
	\newblock {\em Formulas and {T}heorems for the {S}pecial {F}unctions of
		{M}athematical {P}hysics}.
	\newblock 3rd enlarged edition. Die Grundlehren der mathematischen
	Wissenschaften, Band 52. Springer-Verlag New York, Inc., New York, 1966.
	
	\bibitem[OLBC]{NIST}
	Frank W.~J. Olver, Daniel~W. Lozier, Ronald~F. Boisvert, and Charles~W. Clark.
	\newblock {\em N{IST} {H}andbook of {M}athematical {F}unctions}.
	\newblock U.S. Department of Commerce, National Institute of Standards and
	Technology, Washington, DC; Cambridge University Press, Cambridge, 2010.
	
	\bibitem[Olv1]{Olver-1}
	F.~W.~J. Olver.
	\newblock The asymptotic solution of linear differential equations of the
	second order for large values of a parameter.
	\newblock {\em Philos. Trans. Roy. Soc. London. Ser. A.}, 247:307--327, 1954.
	
	\bibitem[Olv2]{Olver-Bessel}
	F.~W.~J. Olver.
	\newblock The asymptotic expansion of {B}essel functions of large order.
	\newblock {\em Philos. Trans. Roy. Soc. London. Ser. A.}, 247:328--368, 1954.
	
	
	
	\bibitem[Qi]{Qi-GL(3)}
	Z.~Qi.
	\newblock Subconvexity for {$L$}-functions on {$\rm GL_3$} over number fields.
	\newblock {\em J. Eur. Math. Soc. (JEMS)}, 26(3):1113--1192, 2024.
	
	\bibitem[Wat]{Watson}
	G.~N. Watson.
	\newblock {\em A {T}reatise on the {T}heory of {B}essel {F}unctions}.
	\newblock Cambridge University Press, Cambridge, England; The Macmillan
	Company, New York, 1944.
	
\end{thebibliography}

\newcommand{\etalchar}[1]{$^{#1}$}

\def\cprime{$'$}

\end{document}